\title{On the trace of branching random walks}
\author{Itai Benjamini \and Sebastian M\"uller}
\newtheorem{thm}{Theorem}[section]
\newtheorem{conj}{Conjecture}[section]
\newtheorem{cor}[thm]{Corollary}
\newtheorem{lem}[thm]{Lemma}
\newtheorem{prop}[thm]{Proposition}
\theoremstyle{definition}
\theoremstyle{remark}
\newtheorem{rem}{Remark}[section]
\newtheorem{que}{Question}[section]
\def\orig{o}
\newcommand{\eps}{\varepsilon}
\newcommand{\En}{\mathcal{E}}
\newcommand{\G}{\mathcal{G}}
\newcommand{\Tr}{\mathtt{Tr}}
\newcommand{\Z}{\mathbb{Z}}
\newcommand{\N}{\mathbb{N}}
\newcommand{\T}{\mathbb{T}}
\newcommand{\1}{\textbf{1}}
\newcommand{\ro}{\textbf{r}}
\renewcommand{\P}{\mathbb{P}}
\newcommand{\PP}{\textbf{P}}
\newcommand{\E}{\mathbb{E}}
\newcommand{\GW}{\textbf{GW}}
\newcommand{\AGW}{\textbf{AGW}}
\newcommand{\UGW}{\textbf{UGW}}
\newcommand{\eval}[2][\right]{\relax
  \ifx#1\right\relax \left.\fi#2#1\rvert}
\begin{document}
\maketitle {\abstract We study branching random walks on  Cayley graphs. A first result is that the trace of a transient branching random walk on a Cayley graph is a.s.~transient for the simple random walk. In addition, it has a.s.~critical percolation probability less than one and exponential volume growth. The proofs rely on the fact that the trace induces an invariant percolation on the family tree of the branching random walk. Furthermore, we prove that the trace is a.s. strongly recurrent for any (non-trivial) branching random walk. This follows from the observation that the trace, after appropriate biasing of the root, defines  a unimodular measure. All  results are stated in the more general context of branching random walks on unimodular random graphs. }
\newline {\scshape Keywords:} branching random walk, trace,
unimodular random network, recurrence, invariant percolation
\newline {\scshape AMS 2000 Mathematics Subject Classification:} 60J10, 60J80, 05C80

\section{Introduction}
A branching random walk (BRW) is a \emph{cloud} of particles that move on an underlying graph $G$ in discrete time. The process starts with one particle in the root $\orig$ of the graph. At each time step each particle splits into offspring particles, which then move one step according to a random walk on $G$. Particles branch and move independently of the other particles and the history of the process. A first natural question is to ask whether the process eventually \emph{fills up} the whole graph, \emph{i.e.}, if every finite subset will eventually be full or free of particles. If the BRW visits the whole graph it is called recurrent and transient otherwise. In the transient case, the set of visited vertices and traversed edges defines a proper random subgraph of $G$ and its properties become of interest. This subgraph is called the trace of the BRW.

 One motivation of this note comes from the fact that the trace of a simple random walk (SRW) on a connected graph is a.s. recurrent with respect to the SRW, see \cite{BG-GL:07}. Recall that a random walk on $G$ is called recurrent if it returns a.s. an infinite number of times to the origin (or any finite subset of $G$) and transient otherwise. In \cite{BG-GL:07} it was conjectured that this phenomenon still holds true for BRWs.
First, we prove that the trace of a transient BRW on a unimodular random graph (URG) is in fact a.s.~transient for SRW, see Theorem \ref{thm:SRWonBRWtrace}, and then that it is a.s.~(strongly) recurrent for every BRW, see Corollary \ref{cor:tr_rec}. Our proofs rely on mappings of the family tree into the base graph of the BRW. In particular, we prove that there exists a unimodular random version of the trace.  The proof of Theorem \ref{thm:SRWonBRWtrace} uses the fact that the trace defines an invariant percolation on the family tree. This idea is also used to prove that the trace of a transient BRW on a URG has a.s. critical percolation probability less than~$1$ and exponential volume growth, see Theorem \ref{thm:critperc} and Proposition \ref{prop:expgrowth}. Besides this, we suggest a list of questions and conjectures about structural properties of the trace of BRWs.

\section{Preliminaries}

\subsection{Branching random walks}
We use the standard notation for a rooted graph $G=(V,E)$: $V$ is the set of vertices, $E$ is the set of edges, $deg(x)$ is the degree of $x$, we write $x\sim y$ if $(x,y)\in E$, and  denote $\orig$ for the root. We always assume the graph to be infinite, connected, and of bounded degree.

The branching random walk (BRW) starts with one particle in the root $\orig$ of the graph and is defined inductively: at each time step each particle splits into offspring particles, which then move one step according to a random walk on $G$. Particles branch and move independently of the other particles and the history of the process. We denote $(p_k)_{k\in \N}$ for the offspring distribution; $p_k$ is the probability that a particle splits into $k$ offspring. Let $m=\sum_k
p_k$ be the mean number of offspring. We will always assume that $p_0=0$ and $p_1<1$, \emph{i.e.}, that particles have at least one offspring, which guarantees the survival of the process, and that the process is not reduced to a non-branching random walk. The movement of the particles is described by the transition kernel $P=(p(x,y))_{x,y\in V}$ of a simple random walk (SRW) denoted by $(S_n)_{n\geq 1}$. Recall that SRW means that $p(x,y)=1/deg(x)$ if $x\sim y$ and $0$ otherwise, and that the connectedness of the graph assures the
irreducibility of the random walk. The probability distribution and expectation will be denoted by $\P_x$ and $\E_x$ for both the SRW and the BRW started in $x$. If not mentioned otherwise the processes always start in the root of the graph, and we write $\P$ and $\E$.

There is an alternative description of BRWs that uses the concept of tree-indexed random walks introduced in \cite{benjamini:94}. Let $\T$ be a rooted infinite tree. Denote by $v$ the vertices of $\T$ and let $|v|$ be the (graph) distance from $v$ to the root $\ro$.  The tree-indexed process $(S_v)_{v\in \T}$ is defined inductively such that
$$S_\ro=\orig~\mbox{and}~\P(S_v=x| S_{v^-}=y)=p(x,y), $$
 where $v^-$ is the unique predecessor of $v$, \emph{i.e.}, $v^{-}\sim v$ and $|v^-|=|v|-1$.
A tree-indexed random walk becomes a BRW if the underlying tree is a realization of a Galton--Watson process with offspring distribution $p=(p_k)_{k\geq 1}$. We call  $\T$ the family tree  and $G$ the base graph of the BRW.

An important class of  unimodular (random)  graphs, see Section \ref{subsec:URG}, are Cayley graphs.  In this case the BRW can be described as a labelled Galton--Watson tree. Let $G$ be a finitely generated group with group identity $\orig$ and write  the group operations multiplicatively. Let $q$ be the uniform probability measure on a finite symmetric generating set of $G$. The SRW on $G$ is the Markov chain with state space $G$ and transition probabilities $p(x,y)=q(x^{-1}y)$ for $x,y\in G$. Equivalently, the process (starting in $x$) can be described as
\[S_n=x X_1\cdots X_n,\quad n\geq1,\] 
where the $X_i$ are i.i.d. random variables with distribution $q$.
Now, label the edges of $\T$ with i.i.d. random variables $X_v$ with distribution $q$; the random variable $X_v$ is the label of the edge $(v^-,v)$. Define $S_v=\orig \cdot \prod_{i} X_{v_i}$ where $\langle v_0=\ro, v_1, \ldots, v_n=v\rangle$ is the unique geodesic from $\ro $ to $v$ at level $n$.

\subsection{Unimodular random graphs}\label{subsec:URG}
Unimodular random graphs (URG) or stochastic homogeneous graphs have several motivations and origins. In this note we concentrate on the probabilistic motivations since these give rise to the tools we are going to use. For more details on the probabilistic viewpoints we refer to \cite{AL:07}, and to \cite{KS:09} for an introduction to the ergodic and measure theoretical origins.

One motivation to consider unimodular random graphs is the use of a \emph{general} Mass-Transport Principle (MTP); this was established in \cite{BS:01} under the name of ``Intrinsic Mass-Transport Principle''. It was motivated by the fact the Mass-Transport Principle  is heavily used in the study of percolation and therefore lifts many results on unimodular graphs to a more general class of graphs. In \cite{AL:07} a probability measure on rooted graphs is called unimodular if this general form of the MTP holds. In \cite{Kai:98} a different language and a more general approach is used. In particular, unimodular measures on rooted graphs correspond to invariant measures of graphed equivalence relations, and unimodular graphs are called stochastic homogeneous graphs.

Let us now give a definition of a unimodular random graph (URG).
We write  $(G,\orig)$  for the graph $G$ with root $\orig$.   A rooted graph $(G,\orig)$ is isomorphic to $(G',\orig')$ if there is an isomorphism of $G$ onto $G'$ which takes $\orig$ to $\orig'$. We denote by $\G_*$ the space of isomorphism classes of rooted graphs and  write $[G,\orig]$ for the equivalence class that contains $(G,\orig)$. The space $\G_{*}$ is equipped with a metric that is induced by the following distance between two rooted graphs $(G,\orig)$ and $(G',\orig'):$ $d( (G,\orig), (G',\orig') )= \frac1{1+\alpha}.$
Here $\alpha$ is the  supremum of those $r>0$ such that there is some rooted isomorphism of the balls of radius $\lfloor r \rfloor$ (in graph distance) around the roots of $G$ and $G'$. 
In this metric  $\G_{*}$ is separable and complete. In the same way one defines the space $\G_{**}$ of isomorphism classes of graphs with an ordered pair of distinguished vertices.  A Borel probability measure $\mu$ on $\G_*$ is called unimodular if it obeys the Mass-Transport Principle: for all Borel function $f: \G_{**}\to[0,\infty]$, we have
\begin{equation}\label{eq:defMTP}
\int \sum_{x\in V} f(G,\orig,x)d\mu([G,\orig])= \int \sum_{x\in V} f(G,x,\orig) d\mu([G,\orig]).
\end{equation}
Observe that this definition can be extended to labelled graphs or networks. A network is a graph $G=(V, E)$ together with a complete metric space $M$ and maps from $E$ and $V$.
While the definition of the above equivalence classes for networks is straightforward, one has to adapt the metric between two networks as follows: the $\alpha$ is chosen as the supremum of those $r>0$ such that there is some rooted isomorphism of the balls of radius $\lfloor r \rfloor$ around the roots of $G$ and $G'$ and such that each pair of corresponding labels has distance at most $1/r$.

Another way to look at unimodular measures uses random walks on rooted graphs. Instead of  considering a random walk on a graph and observing its position  on the graph,  we keep track of the environment seen from the point of view of the particle. The state space is then the space of rooted graphs, where the position of the random walk corresponds to the root. Furthermore, there is a one-to-one correspondence between stationary measures of the environment seen from the particle and unimodular measures on rooted graphs: the density of the stationary measure with respect to the unimodular measure is  the vertex degree function, see \cite{Kai:98}. Observe that in \cite{Kai:98} this connection is given in terms of invariant measures for treed-equivalence relations.

To illustrate this connection let us consider an example that is important for us: the Galton--Watson measure. Let $p=\{p_k\}_{k\in\N}$ be a probability distribution on the integers.  The Galton--Watson tree is defined inductively: start with one vertex, the root of the tree. Then, the number of offspring of each particle (vertex) is distributed according to $p$. Edges are between vertices and their offspring. We denote by \GW~the corresponding measure on the space of rooted trees. We always assume that $p_0=0$ which implies that the tree is infinite a.s. and has no leaves. In this
construction the root clearly plays a special role. In \cite{LPP:95} the augmented Galton--Watson measure (\AGW) was introduced where the root has $k+1$ offspring with probability $p_k$ and they showed that \AGW~is the stationary measure for the environment seen from the point of view of the SRW. In the unimodular Galton--Watson measure (\UGW) the root has a degree biased distribution: the probability that the root has degree $k+1$ is $\frac1c \cdot\frac{p_{k}}{k+1}$ with $c= \sum_{i} (p_{i}/(i+1))$.  In cases where we use the \UGW~measure instead of the standard \GW~measure to define the family tree of the BRW we denote the BRW by UBRW.

\subsection{Basic results}
One first question to ask is whether the trace is a proper random subgraph of the base graph. This is equivalent to the question of recurrence of the process. Recall that a (non-branching) random walk is called recurrent if it returns infinitely many times to its starting point and transient if it eventually leaves every finite set. This definition can be generalized to BRWs modulo the following observation. Let $\alpha(x)$ be the probability that a BRW started in $x$ visits $x$ an infinite number of times. 
Now, irreducibility of the underlying SRW guarantees that the following terms are well-defined:  a BRW is called strongly recurrent if $\alpha(x)=1$ for all (some) $x\in G,$ weakly recurrent if $0<\alpha(x)<1$ for all (some) $x\in G,$ and transient if $\alpha(x)=0$ for all (some) $x\in G$. We say the BRW is recurrent if it is not transient.  (Notice that strong recurrence is equivalent to guaranteed return, \emph{i.e.}, the process returns to the starting position almost surely.) 
While in general a BRW may be weakly recurrent, in  \emph{homogeneous} cases, see  \cite{mueller08} and Theorem \ref{thm:rec_unimod} below, a recurrent BRW is always strongly recurrent. We refer to \cite{mueller08} where more references and details about the different types of BRWs can be found.

It turns out that recurrence and transience depend on \emph{local} properties of the graph and can be classified using the spectral radius of the random walk: $$\rho(P)=\limsup_{n\to\infty} (p^{(n)}(x,y))^{1/n}.$$ Note that, due to the irreducibility, $\limsup (p^{(n)}(x,y))^{1/n}$ does not depend on~$x$ and~$y$. We write $\rho(G)$ for the spectral radius of the SRW on $G$.

\begin{thm}\label{thm:rec_trans}\cite{gantert:04}
The BRW with underlying irreducible Markov chain $P$ is recurrent if $m>1/\rho(P)$ and transient otherwise.
\end{thm}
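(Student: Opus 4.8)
The plan is to reduce the dichotomy to a Galton--Watson extinction criterion for an auxiliary ``return tree'' based at a fixed vertex, and then to locate the critical value of $m$ using classical generating-function identities for $P$. Fix $x\in V$ and use the tree-indexed description $(S_v)_{v\in\T}$ with $S_\ro=x$. Call $v\in\T$ a \emph{first return to $x$} if $v\neq\ro$, $S_v=x$, and $S_w\neq x$ for every $w$ strictly between $\ro$ and $v$ on the geodesic; let $L$ be the number of first returns and $\bar L=\E_x[L]$. By the branching Markov property the collection of all particles that visit $x$, with each joined to its nearest strict ancestor in $\T$ that also visits $x$, forms a Galton--Watson tree whose offspring distribution is the law of $L$: the sub-BRWs rooted at distinct visits to $x$ are i.i.d.\ copies of the BRW from $x$, independent of the past. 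Hence the BRW visits $x$ infinitely often with positive probability --- i.e.\ $\alpha(x)>0$ --- if and only if this tree is infinite with positive probability, which by the extinction criterion for branching processes happens exactly when $\bar L>1$ (the offspring law here is not the point mass at $1$, as one checks from $p_0=0$ and $p_1<1$, so the usual critical case gives a.s.\ extinction). Thus \emph{the BRW is recurrent iff $\bar L>1$.}

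The next step computes $\bar L$. By linearity of expectation and the elementary many-to-one identity --- the expected number of level-$n$ vertices of $\T$ is $m^n$, and, given that a prescribed vertex sits at level $n$, the increments of $S$ along its geodesic from $\ro$ form a copy of $P$ started at $x$, independent of $\T$ --- the expected number of first returns at level $n$ is $m^n f^{(n)}(x,x)$, where $f^{(n)}(x,x)$ is the $n$-step first-return probability of $P$. Summing,
\[
\bar L\;=\;\sum_{n\geq1}m^n f^{(n)}(x,x)\;=\;F(x,x\,|\,m)\qquad\text{in }[0,\infty],
\]
where $F(x,x\,|\,z)=\sum_{n\geq1}f^{(n)}(x,x)z^n$ is the first-return generating function. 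Now bring in two classical facts about $P$: the first-passage decomposition, giving $G(x,x\,|\,z):=\sum_{n\geq0}p^{(n)}(x,x)z^n=\bigl(1-F(x,x\,|\,z)\bigr)^{-1}$ when $F(x,x\,|\,z)<1$ and $G(x,x\,|\,z)=\infty$ once $F(x,x\,|\,z)\geq1$; and Cauchy--Hadamard, by which $z\mapsto G(x,x\,|\,z)$ has radius of convergence $1/\rho(P)$, so $G(x,x\,|\,z)<\infty$ for $z<1/\rho(P)$ and $G(x,x\,|\,z)=\infty$ for $z>1/\rho(P)$.

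These combine directly. If $m<1/\rho(P)$ then $G(x,x\,|\,m)<\infty$, so $F(x,x\,|\,m)<1$, so $\bar L<1$ and the BRW is transient --- indeed $\E_x[\#\{v\in\T:S_v=x\}]=G(x,x\,|\,m)<\infty$, so $x$, and any finite set, is visited finitely often a.s. If $m>1/\rho(P)$ then $G(x,x\,|\,m)=\infty$, so $F(x,x\,|\,m)\geq1$; it cannot equal $1$, since $F(x,x\,|\,\cdot)$ is strictly increasing wherever finite (by irreducibility $f^{(n)}(x,x)>0$ for some $n\geq1$), so $F(x,x\,|\,m)=1$ would force $F(x,x\,|\,z)<1$ on $(1/\rho(P),m)$, where however $G(x,x\,|\,z)=\infty$ forces $F(x,x\,|\,z)\geq1$ --- a contradiction. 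Hence $\bar L=F(x,x\,|\,m)>1$ and the BRW is recurrent. The genuinely delicate point, which I expect to be the crux of the argument, is the boundary $m=1/\rho(P)$: here $G(x,x\,|\,1/\rho(P))$ may be infinite, so the first-moment bound is useless and one really needs the Galton--Watson reduction. But $F(x,x\,|\,z)<1$ for every $z<1/\rho(P)$, so by monotone convergence $\bar L=F(x,x\,|\,1/\rho(P))=\lim_{z\uparrow1/\rho(P)}F(x,x\,|\,z)\leq1$, and the BRW is transient by the criterion above. Finally, irreducibility of $P$ carries these conclusions from $x$ to every vertex, which is the statement of the theorem.
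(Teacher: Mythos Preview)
The paper does not give a proof of this theorem; it is quoted with a citation to \cite{gantert:04} and used as a black box. So there is no ``paper's own proof'' to compare against.

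That said, your argument is correct and is essentially the standard one: build the Galton--Watson tree of returns to $x$, identify its offspring mean as $F(x,x\,|\,m)$ via many--to--one, and locate where $F$ crosses $1$ using the identity $G=(1-F)^{-1}$ together with the fact that $G(x,x\,|\,\cdot)$ has radius of convergence $1/\rho(P)$. The case split and the treatment of the boundary $m=1/\rho(P)$ are handled correctly.

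One point you pass over quickly deserves a line of justification: that the law of $L$ is not the Dirac mass at $1$ (needed so that the critical Galton--Watson tree dies out). A clean way to see this is to condition on the event, of positive probability since $p_1<1$, that the root has $k\ge 2$ offspring which all step to the same neighbour $y$ of $x$. On that event $L=L_1+\cdots+L_k$ with the $L_i$ i.i.d.; if $\P(L_i=0)>0$ then $\P(L=0)>0$, while if $\P(L_i=0)=0$ then $L\ge k\ge 2$ a.s., so in either case $\P(L=1)<1$. This fills the only gap in your write--up.
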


\begin{rem}
Recall the well-known amenability criterion of Kesten: a finitely generated group is amenable if and only if the SRW on its Cayley graph  has spectral radius $1$, e.g. see Section 12.A. in  \cite{woess}.  An immediate consequence of the latter result and Theorem \ref{thm:rec_trans}  is that a finitely generated group is amenable if and only if any BRW with $m>1$ is recurrent on its Cayley graph.
\end{rem}
In \emph{homogeneous} cases, as Cayley graphs, quasi-transitive graphs, i.i.d. random environment, a $0-1$-law for $\alpha$ is established in \cite{mueller08}. This fact generalizes to BRWs on unimodular random graphs.
\begin{thm}\label{thm:rec_unimod} 
Let $\mu$ be a unimodular measure. Then for $\mu$-a.a. $G$ the BRW is strongly recurrent if $m>1/\rho(G)$ and transient otherwise.
\end{thm}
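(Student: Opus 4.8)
\emph{Proof plan.} The plan is: (i) dispose of transience by quoting Theorem~\ref{thm:rec_trans}; (ii) reduce strong recurrence to a $0$--$1$ law for $\alpha$, via a first-step analysis of the probability of \emph{not} hitting a fixed vertex; (iii) supply the $0$--$1$ law by a maximum principle on the base graph together with the Mass-Transport Principle. Since $\rho(G)$ is a measurable, re-rooting-invariant function of $(G,\orig)$, the region $\{m\le1/\rho(G)\}$ is $\mu$-measurable, and on it Theorem~\ref{thm:rec_trans} gives $\alpha_G\equiv0$, i.e.\ transience for $\mu$-a.a.\ such $G$. If the SRW on $G$ is recurrent then $\rho(G)=1$, so $m>1=1/\rho(G)$, and the BRW is automatically strongly recurrent since a single lineage of it is a recurrent SRW on $G$. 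So assume from now on that $G$ is SRW-transient and $m>1/\rho(G)$; the goal is $\alpha_G\equiv1$.

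Fix a vertex $\orig$ of such a $G$ and set $g(x):=1-\P_x(\text{the BRW from }x\text{ ever visits }\orig)\in[0,1]$, so $g(\orig)=0$ and, conditioning on the positions of the first-generation particles,
\begin{equation*}
g(x)=\phi\Bigl(\tfrac1{\deg(x)}\textstyle\sum_{y\sim x}g(y)\Bigr)\qquad(x\neq\orig),
\end{equation*}
where $\phi(s)=\sum_k p_k s^k$ is the offspring generating function. As $p_0=0$ and $p_1<1$, the map $\phi\colon[0,1]\to[0,1]$ is a convex increasing bijection with $\phi(s)<s$ on $(0,1)$, hence $\phi^{-1}(s)>s$ there. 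Taking $x_n$ with $g(x_n)\to s:=\sup_x g(x)$, the identity gives $\tfrac1{\deg(x_n)}\sum_{y\sim x_n}g(y)=\phi^{-1}(g(x_n))\to\phi^{-1}(s)$, an average that is $\le s$; so $\phi^{-1}(s)\le s$, forcing $s\in\{0,1\}$. If $s=0$ then $g\equiv0$, whence $\P_\orig(\text{return to }\orig)=1-\phi(0)=1$ and, the visits to $\orig$ forming a Galton--Watson tree whose offspring law puts no mass at $0$, we get $\alpha_G(\orig)=1$; since strong recurrence does not depend on the starting vertex, $\alpha_G\equiv1$. On the other hand, branching off independent sub-walks at the (infinitely many) returns to a vertex $x$ shows that on $\{\text{BRW from }x\text{ visits }x\text{ i.o.}\}$ the BRW a.s.\ visits every vertex i.o.\ (here one uses irreducibility, bounded degree, and SRW-transience of $G$); in particular $\P_x(\text{BRW from }x\text{ visits }\orig)\ge\alpha_G(x)$, so $g(x)\le1-\alpha_G(x)$. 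Hence the BRW on $G$ is strongly recurrent unless $s=1$, and $s=1$ forces $\inf_x\alpha_G(x)=0$; it thus suffices to rule out $\inf_x\alpha_G(x)=0$ when $m>1/\rho(G)$.

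By the ergodic decomposition we may assume $\mu$ ergodic, so that $\rho(G)$ is $\mu$-a.s.\ constant and the re-rooting-invariant event ``the BRW is weakly recurrent'' has $\mu$-measure $0$ or $1$; we must show it is $0$. For transitive $G$ this is \cite{mueller08}: transitivity makes $\alpha_G(\cdot)$ constant, so $\inf_x\alpha_G(x)=\alpha_G(\orig)>0$ and the previous paragraph gives $s\le1-\alpha_G(\orig)<1$, hence strong recurrence. For a URG the plan is to replace this use of transitivity by the Mass-Transport Principle~\eqref{eq:defMTP}. Assuming weak recurrence on a positive-measure set, the standard consequence of \eqref{eq:defMTP} --- that a property failing $\mu$-a.s.\ at the root holds at no vertex --- yields $\mu(\alpha_G(\orig)<\varepsilon)>0$ for every $\varepsilon>0$; so, with positive probability, there are arbitrarily large regions carrying values of $\alpha_G$ near $0$, equivalently (by $g(x)\le1-\alpha_G(x)$ and the dichotomy) regions on which the bounded super-harmonic function $1-g$ is near $0$ although $1-g(\orig)=1$. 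Transporting mass built from $1-g$ and its Riesz charge $\phi^{-1}(g)-g\ge0$ through \eqref{eq:defMTP} should then contradict the divergence of the first-moment Green function $\sum_n m^n p^{(n)}(\orig,\orig)=\infty$, which is precisely the condition $m>1/\rho(G)$ of Theorem~\ref{thm:rec_trans}. The hard part will be exactly this last step --- recovering, by a mass-transport estimate valid for unimodular random graphs, the quantitative input that \cite{mueller08} draws from transitivity; granting it, $s\ne1$ $\mu$-a.s., so $g\equiv0$ and $\alpha_G(\orig)=1$, and hence $\alpha_G\equiv1$ $\mu$-a.s.
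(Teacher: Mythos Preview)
Your proposal contains a genuine gap: you explicitly identify ``the hard part'' --- ruling out $\sup_x g(x)=1$ (equivalently $\inf_x\alpha_G(x)=0$) via a mass-transport argument --- and then do not carry it out. The sentence about ``transporting mass built from $1-g$ and its Riesz charge $\phi^{-1}(g)-g$'' is a hope, not a proof; note moreover that $g=g_\orig$ depends on the distinguished target vertex $\orig$, so it is not a re-rooting covariant function of $(G,x)$ alone, and it is not clear what two-variable Borel function you would feed into \eqref{eq:defMTP}. The preliminary steps (the functional equation for $g$, the convexity dichotomy $\sup g\in\{0,1\}$, and the inequality $g(x)\le1-\alpha_G(x)$) are correct and pleasant, but they only reduce the theorem to the missing step.

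The paper's argument sidesteps this entirely, and it is worth comparing. Rather than trying to control $\alpha_G$ globally on $G$, the paper controls it only along the trajectory of a \emph{single} particle lineage: fix one infinite geodesic $\langle\ro,v_1,v_2,\dots\rangle$ in the family tree, so that $(S_{v_n})_n$ is an ordinary SRW on $G$, and look at the environment process $(G_n,\orig_n)$ seen from this walk. Unimodularity enters here in the simplest possible way --- it makes this environment process stationary --- and hence $\alpha_n:=\alpha_{G}(\orig_n)$ is a stationary sequence which (by Theorem~\ref{thm:rec_trans}) is strictly positive. Stationarity immediately gives a deterministic $c>0$ and an infinite random set of times $n_j$ with $\alpha(\orig_{n_j})>c$. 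At each such time the genuine branching (recall $p_1<1$) launches, with positive probability, an independent BRW from $\orig_{n_j}$; Borel--Cantelli then produces a vertex visited infinitely often, and irreducibility finishes. The point is that you never need $\inf_x\alpha_G(x)>0$; you only need $\alpha$ to be bounded away from $0$ along one realized SRW path, and stationarity gives that for free. This is both shorter and avoids the unfinished MTP step in your plan.
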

\begin{proof}
The fact that $m\leq 1/\rho(G)$ implies transience is just one part of Theorem \ref{thm:rec_trans}. The other part of this Theorem ensures that $m> 1/\rho(G)$ implies recurrence and hence that $\alpha(x)>0$ for all $x$. Now, as in \cite{mueller08}, the idea of the proof is to find a (random) sequence $(y_{j})_{j\in\N}$ of vertices that are visited by the BRW and satisfy $\alpha(y_{j})>c$ for some $c>0$. This implies that at least one of the $y_{j}$ will be visited infinitely many times. Eventually, the irreducibility of the SRW implies that every vertex is visited infinitely many times, hence $\alpha(x)=1$ for all $x$.

Let us fix one geodesic $\langle \ro, v_1, v_2,\ldots\rangle$ in the family tree. The values of $S_v$ along the geodesic correspond to the values of a (non-branching) random walk. Denote by $(G_n, \orig_n)$ the environment process with $(G_0,\orig_0)=(G,\orig)$ of this random walk. Observe that  unimodularity of the base graph implies that the environment process is stationary and recall that $\orig_{n}$ corresponds to the position of the SRW at time $n$. In particular, the $\alpha_n=\alpha(\orig_n)$ form a stationary and strictly positive sequence. Hence there exists some constant $c>0$ such that there exists some (random) subsequence $y_{j}=x_{n_{j}}$  of $\orig_{n}$ with $\alpha(y_{j})>c$.   At each time $n_{j}$, $k-1$ new independent BRWs are started from the geodesic (in $y_{j}$) with probability $p_k$. Each of those has probability $\alpha(y_{j})>0$ that infinitely many particles visit $y_{j}$, hence there exists a random element $y^{*}$ of $(y_{j})_{j\in\N}$ such that $y^{*}$ is visited infinitely many times. 

\end{proof}
\begin{rem}
Note that one also could use a \emph{seed}-argument introduced in \cite{comets:05} together with the MTP. A seed is a finite subset of $G$ such that the process restricted to this set is a supercritical Galton--Watson process. By MTP it follows that infinitely many seeds are visited.
\end{rem}

\begin{rem}
While $\rho(G)$ in Theorem \ref{thm:rec_unimod}  may in general be random, it is deterministic if the measure $\mu$ is extremal.
\end{rem}

\section{Properties of the trace}
Before asking whether the trace, denoted by $\Tr$, is recurrent for BRWs it is appropriate to ask if it is transient for the  SRW. For BRWs on homogeneous trees it was shown in \cite{hueter:00} that the trace is a.s. transient for the SRW. We extend this result to BRWs on unimodular random graphs.

\begin{thm}\label{thm:SRWonBRWtrace}
The trace of a transient BRW on a unimodular random graph is a.s. transient for the SRW.
\end{thm}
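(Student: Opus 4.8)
The plan is to exploit the structure of the family tree $\T$ together with the map $v\mapsto S_v$, viewing the trace $\Tr$ as the image of $\T$ in the base graph $G$. The key observation is that, conditional on $G$, the subset of edges of the Galton--Watson tree whose image under $v\mapsto S_v$ survives in $\Tr$ (that is, the edges that are the "first time" a given edge of $G$ is traversed, or some canonical cleanup thereof) forms an \emph{invariant percolation} on $\T$. More precisely, I would first construct a unimodular random rooted graph that carries a coupled copy of $(\T,\ro)$, the labels $(X_v)$, the base graph $(G,\orig)$, and the induced trace: since the BRW on a URG is itself a URG after appropriately biasing the root, the environment-seen-from-the-particle machinery of Section \ref{subsec:URG} applies. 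The transience of the BRW means $m>1/\rho(G)$ is \emph{not} the relevant regime; rather, transience of the BRW (in the sense $\alpha(x)=0$) is exactly what makes the fibers $\{v\in\T: S_v=x\}$ finite a.s., so that $\Tr$ really is a proper subgraph and every edge of $\Tr$ has at most finitely many preimages in $\T$.

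Next I would set up the comparison of random walks. The standard tool here is that if $\T'\subseteq\T$ is an invariant (automorphism-equivariant, or rather unimodular) percolation on a tree, and the percolation has an infinite component with branching number $>1$ (equivalently, positive speed / non-amenable cluster), then SRW on that cluster is transient; this is essentially the Benjamini--Lyons--Peres--Schramm / Lyons--Peres theory of random walks on unimodular random trees, combined with the fact that a Galton--Watson tree with mean $m>1$ has branching number $m>1$ a.s. The map $v\mapsto S_v$ is a rough isometry from (a suitable subtree of) $\T$ onto $\Tr$ only in special cases, so instead I would transfer transience directly: construct a flow of finite energy on $\Tr$ by pushing forward a unit flow of finite energy from $\T$ (which exists because $\T$ is transient, being a supercritical Galton--Watson tree), using the bounded-degree assumption on $G$ to control how many tree-edges map to a single base-edge and hence to bound the energy of the pushed-forward flow. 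Finiteness of the fibers (from transience of the BRW) plus bounded degree is what keeps the energy finite.

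The main obstacle, I expect, is precisely the control of the fibers: a priori, even for a transient BRW, a single vertex or edge of $G$ could be hit by many tree-vertices, and the number of tree-edges mapping onto a fixed edge of $\Tr$ need not be uniformly bounded, only a.s.\ finite. To get a finite-energy flow on $\Tr$ one needs the \emph{expected} multiplicity to be controlled in a unimodular way, and this is where I would use the MTP \eqref{eq:defMTP}: transport mass from each tree-edge to its image edge in $\Tr$, and use unimodularity of the coupled picture together with transience ($\sum_v \P(S_v=x)<\infty$, i.e.\ the Green-type function of the BRW is finite) to conclude that the pushed-forward flow has finite expected energy, hence finite energy a.s. Once finite energy of some nonzero flow on $\Tr$ is established, the Nash-Williams / T. Lyons criterion gives transience of SRW on $\Tr$ for $\mu$-a.e.\ realization, completing the proof. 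A secondary technical point is ensuring the trace is connected and locally finite so that "transience of SRW on $\Tr$" is even well-posed; this follows from $p_1<1$, $p_0=0$, bounded degree of $G$, and irreducibility.
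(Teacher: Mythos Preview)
Your overall architecture---push forward a finite-energy unit flow from the family tree $\T$ to the trace via $v\mapsto S_v$, and invoke the T.~Lyons criterion---is exactly the paper's strategy, and you correctly isolate the obstacle: the fibers $T_{\langle x,y\rangle}=\{v:\langle S_{v^-},S_v\rangle=\langle x,y\rangle\}$ are a.s.\ finite but not uniformly bounded, so the pushed-forward energy $\sum_e\bigl(\sum_{v\in T_e}\theta(v^-,v)\bigr)^2$ is not obviously finite.

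Where your proposal falls short is the resolution of this obstacle. You suggest controlling the \emph{expected} energy of the pushed-forward flow via the MTP and the finiteness of the BRW Green function $\sum_v\P(S_v=x)$. But the energy involves the \emph{square} of the fiber-sum of flow values; by Cauchy--Schwarz this is bounded by $\sum_v |T_{\langle S_{v^-},S_v\rangle}|\,\theta(v^-,v)^2$, and there is no evident MTP argument that makes this finite in expectation, since the fiber size and the flow value at $v$ are correlated in a complicated way. ``Finite expected energy, hence finite a.s.'' is not substantiated. Likewise, your alternative percolation (keep only the ``first'' tree-edge over each base-edge) has uniformly bounded fibers but you give no reason why its infinite cluster should have branching number $>1$.

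The paper's device, which you are missing, is a different truncation: for a threshold $N$, let $\T_N$ be the subgraph of $\T$ consisting of edges $\langle v^-,v\rangle$ with $|T_{\langle S_{v^-},S_v\rangle}|\le N$. On $\T_N$ every base-edge has at most $N$ preimages \emph{by construction}, so the pushed-forward flow automatically has finite energy once $\T_N$ carries one. The nontrivial point is that $\T_N$ is an invariant percolation of the (homogeneous, after Dekking's embedding) family tree, with marginal tending to $1$ as $N\to\infty$; H\"aggstr\"om's results \cite{Hag:97} then guarantee that for large $N$ the cluster of the root is infinite with branching number $>1$, hence transient, hence carries a finite-energy unit flow. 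This truncation-plus-invariant-percolation step is the genuine technical input, and it replaces your proposed (and unjustified) expected-energy computation.
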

\begin{proof} 
The proof relies on the interpretation of the BRW as a tree-indexed random walk. The main idea is roughly the following. Since the SRW on the Galton--Watson tree is transient there exists a unit flow with finite energy from $\ro$ to infinity. We map this flow into the base graph in order to obtain a unit flow. The crux is then to show that the flow in the base graph has finite energy. In order to control the latter energy we consider appropriate subgraphs of the family tree.

Let us just recall the basic definitions and notations; we refer to \cite{lyons:book} for more details. Directed edges are denoted by $e=\langle e^-, e^+\rangle$. A flow $\theta$ is an antisymmetric real valued function on the edge set. The energy of a flow is defined as
$$\En(\theta)=\|\theta\|_r^2= \frac12 \sum_e r(e) \theta(e)^2,$$ where $r(\cdot)$ denotes the resistances of the edges. A flow $\theta$ is a unit flow from $a\in V$ to infinity if for all $x\in V:~\sum_{e^-=x} \theta(e)=\1_{\{a\}}(x)$. There is the well-known criterion for recurrence and transience for electrical networks due to \cite{TL:83}: the random walk on a countable infinite connected network $G$ is transient iff there exists a unit flow with finite energy on $G$ from some (every) vertex to infinity.

We can use the tree-indexed random walk to define a random mapping of the family tree $\T$ to $G$: the edge $\langle v^{-}, v \rangle$ in $\T$ is mapped to the edge $\langle S_{v^-}, S_v\rangle$ in $G$.  The above mapping enables us to define a percolation on the tree $\T$. 
Let $N>0$ and define $\T_N$ as the induced subgraph that consists of all edges $$\{\langle v^-, v\rangle:~ |T_{\langle S_{v^-}, S_v\rangle}|\leq N \},$$
where   
$$T_{\langle x, y\rangle}=\{v\in \T:~ \langle S_{v^-}, S_{v}\rangle=\langle x, y\rangle\}.$$

Let us first assume that the family tree $\T$ is a homogeneous tree. Since $G$ is a unimodular random graph  $\T_N$ defines an invariant percolation of the family tree. Theorem 1.6 in \cite{Hag:97} guarantees the existence of infinite clusters in this percolation process with sufficiently high marginal ($N$ sufficiently large). Furthermore, by Theorem 1.3 and Theorem 1.5 in \cite{Hag:97}, the branching number  of an infinite component is strictly larger than $1$. Note that in order to apply Theorem 1.3 in \cite{Hag:97} the percolation  $\T_{N}$ has to  satisfy the finite energy condition. This can be easily achieved by replacing $\T$ by a Bernoulli($p$)-percolation of $\T$. Hereby we have to choose $p$ sufficiently large to ensure that $\T_{N}$ has sufficiently large marginal.

The fact that the branching number of $\T_{N}$ is strictly larger than $1$ implies the transience of the SRW on infinite clusters of $\T_N$ for $N$ sufficiently large. We can assume that the root $\ro$ is part of an infinite cluster and let $\theta_N$ be a unit flow of finite energy from $\ro$ to infinity in $\T_N$.  A flow on $\T_{N}$ induces a flow $\theta_{N,G}$ on $G$: let $\langle x, y\rangle$ be an edge in $G$ and define
$$\theta_{N,G}(\langle x,y\rangle)=\sum_{v\in T_{N,\langle x,y\rangle}} \theta(\langle v^{-},v\rangle),$$ where
 $$T_{N,\langle x, y\rangle}=\{v\in \T_{N}:~ \langle S_{v^-}, S_{v}\rangle=\langle x, y\rangle\}.$$

Due to the construction of $\T_N$ the induced flow $\theta_{N,G}$ in~$G$ is a unit flow of  finite energy. Hence the subgraph of the trace that consists of all edges that were visited less then $N$ times and contains the origin is transient for the SRW. Since the existence of a transient subgraph implies transience of the whole graph, see e.g. Corollary 2.15 in  \cite{woess}, the trace of the BRW is transient too. 

For the general family tree we use Theorem~2 in \cite{Dekking:91}: there exists some constant $K$ such that the family tree contains a full binary tree whose edges are stretched to a path of length $K$. Now, we can argue as above by considering the trace of the random walk  indexed by the stretched binary tree.
\end{proof}

We want to highlight the usefulness of the concept that underlies the proof of Theorem
\ref{thm:SRWonBRWtrace} and give several applications: Theorem \ref{thm:critperc}, Proposition \ref{prop:expgrowth}, and Lemmata~\ref{lem:ends}   and \ref{lem:cutpoints}. 

Let us consider Bernoulli$(p)$ percolation on a locally finite graph $G$; for fixed $p\in[0,1]$, each edge is kept with probability $p$ and removed otherwise  independently of the other edges. The random subgraph that remains after percolation is denoted by $\omega$. Denote $\PP_p$ for the corresponding probability measure and define the critical probability
$$p_c(G)=\sup\{p:~\PP_p(\exists~\mbox{infinite component of}~\omega)=0\}.$$
\begin{thm}\label{thm:critperc}
Let $\Tr$ be the trace of a transient BRW on a URG. Then, $$p_c(\Tr)<1~a.s.$$
\end{thm}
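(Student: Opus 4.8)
The plan is to follow the template established in the proof of Theorem~\ref{thm:SRWonBRWtrace}. Realise the BRW as a tree-indexed random walk with map $\phi\colon \T\to G$, $v\mapsto S_v$, so that $\Tr=\phi(\T)$, and exhibit inside $\Tr$ a subgraph with critical probability strictly below~$1$; since $p_c$ is monotone under passing to subgraphs ($H'\subseteq H$ implies $p_c(H)\le p_c(H')$), this yields $p_c(\Tr)<1$. Exactly as in that proof, after the Dekking--Grimmett reduction --- and, if necessary, a preliminary Bernoulli thinning of $\T$ to guarantee the finite-energy condition --- we may assume $\T$ is homogeneous and reuse the invariant percolation $\T_N$ on $\T$. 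For $N$ large, Theorem~1.6 of \cite{Hag:97} provides an infinite cluster $\mathcal C$ of $\T_N$, and Theorems~1.3 and~1.5 of \cite{Hag:97} give $\mathrm{br}(\mathcal C)>1$; by Lyons' theorem ($p_c=1/\mathrm{br}$ for trees, see e.g.\ \cite{lyons:book}) this means $p_c(\mathcal C)<1$. By construction of $\T_N$, every directed edge of $G$ has at most $N$ preimages under $\phi$ in $\T_N$, so the restriction $\phi\colon\mathcal C\to\phi(\mathcal C)\subseteq\Tr$ is a graph homomorphism that is at most $N$-to-one on edges. It remains to transfer $p_c(\mathcal C)<1$ to the lossy image $\phi(\mathcal C)$.

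This transfer is the heart of the argument. Fix $q<1$, run Bernoulli$(q)$ percolation on $\phi(\mathcal C)$, and pull it back to $\mathcal C$ by declaring an edge $e$ of $\mathcal C$ open iff $\phi(e)$ is open. In this pulled-back field two edges of $\mathcal C$ receive identical (and otherwise independent) states precisely when they lie in the same $\phi$-fibre, and every fibre contains at most $N$ edges; hence the indicators ``$e$ is closed'' form a random field on the edge set of $\mathcal C$ with marginals $1-q$ whose dependency graph has degree at most $N-1$. By the Liggett--Schonmann--Stacey stochastic-domination theorem for finite-range dependent fields, this field is dominated by a Bernoulli$(\pi)$ product field with $\pi=\pi(N,1-q)\to0$ as $q\uparrow1$; equivalently, the pulled-back percolation on $\mathcal C$ stochastically dominates Bernoulli$(1-\pi)$ percolation on $\mathcal C$. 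Choosing $q$ close enough to~$1$ that $1-\pi(N,1-q)>p_c(\mathcal C)$, the pulled-back percolation therefore has an infinite open cluster in $\mathcal C$ with positive probability. The $\phi$-image of such a cluster is a connected set of open edges in $\phi(\mathcal C)$, and it is infinite because $\phi$ is at most $N$-to-one; so Bernoulli$(q)$ percolation on $\phi(\mathcal C)$ has an infinite cluster with positive probability, whence $p_c(\phi(\mathcal C))\le q<1$ and finally $p_c(\Tr)<1$.

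Two points deserve attention. First, $\T_N$ has an infinite cluster a.s.\ for $N$ large (on the sure event of survival), but that cluster need not contain the root of $\T$; this is harmless, since $\Tr$ is connected and the event $\{p_c(\Tr)<1\}$ does not depend on a choice of base vertex, so one may anchor the whole argument at $\phi(v)$ for an arbitrary $v$ in an infinite cluster of $\T_N$. Second, I expect the genuinely delicate step to be the transfer paragraph: the pullback of a Bernoulli percolation along a non-injective map is an awkwardly correlated field, and what makes it tractable is precisely that $\T_N$ was constructed so that its $\phi$-fibres, hence the correlation classes, have bounded size --- this is the single feature that unlocks the Liggett--Schonmann--Stacey comparison. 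The reduction of a general family tree to a homogeneous one and the verification of the finite-energy hypothesis of \cite{Hag:97} are word-for-word the corresponding steps in the proof of Theorem~\ref{thm:SRWonBRWtrace} and can simply be quoted.
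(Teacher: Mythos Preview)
Your argument is correct, but it takes a longer route than the paper's. Both proofs pull back the Bernoulli percolation on $\Tr$ to the family tree through the map $\phi$, and both exploit that fibres in $\T_N$ have size at most $N$. The difference lies in how the existence of an infinite cluster upstairs is established.

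The paper skips the detour through $p_c(\mathcal C)$ and Liggett--Schonmann--Stacey entirely. It simply observes that the pullback $\T_{N,p}$ --- the set of edges of $\T_N$ whose $\phi$-image survives Bernoulli$(p)$ percolation on $\Tr$ --- is again an invariant percolation on the homogeneous family tree, with marginal at least $p$ times the marginal of $\T_N$. Choosing first $N$ and then $p<1$ large makes this marginal as close to $1$ as desired, and a second application of Theorem~1.6 in \cite{Hag:97} (the same theorem already used for $\T_N$) yields an infinite cluster in $\T_{N,p}$ directly. Its $\phi$-image is then an infinite connected subgraph of the $p$-percolated trace, and the $N$-to-one bound guarantees infiniteness of the image.

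So the paper reuses the H\"aggstr\"om machinery once more rather than importing a new comparison theorem. Your approach is more modular: the LSS step isolates a general principle --- that $p_c<1$ transfers along graph homomorphisms with uniformly bounded edge-fibres --- which would survive outside the invariant-percolation framework. The cost is an extra external input and the need to know that $\mathrm{br}(\mathcal C)$ is a.s.\ bounded away from $1$ (so that a single $q<1$ works), whereas the paper's argument needs only the marginal bound and gives a deterministic $p<1$ straight away.
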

\begin{proof}
We have to prove that for some $p<1$ the trace contains an infinite connected cluster. Consider the tree $\T_N$ defined in the proof of Theorem \ref{thm:SRWonBRWtrace} and recall that $\T_N$ defines an invariant percolation. Let us define yet another invariant percolation $\T_{N,p}$ via the Bernoulli percolation on the trace: $\T_{N,p}$ consists of those vertices of $\T_N$ that are mapped to an edge of $\Tr$ that remains after percolation of the trace. Observe, that erasing an edge of $\Tr$ corresponds to erasing at most $N$ edges of $\T_N.$ Using Theorem 1.6 in \cite{Hag:97}  we can choose first $N$ and then $p<1$
sufficiently large such that the marginal of the percolation defined by $\T_{N,p}$ is sufficiently large  to guarantee the existence of an infinite cluster. We conclude by observing that an infinite cluster of $\T_{N,p}$ is mapped onto a infinite subgraph of the percolated trace.
\end{proof}

Let $B^{(n)}$ be the ball around the origin of radius $n$ and define $\Tr^{(n)}=\Tr\cap B^{(n)}$.

\begin{prop}\label{prop:expgrowth}
The trace of a transient BRW on a URG has a.s.~exponential volume growth, \emph{i.e.}, $\exists
c>0$ and $r>1$ such that $|\Tr^{(n)}|\geq c r^n$ for all $n\geq 1$.
\end{prop}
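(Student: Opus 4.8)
The plan is to reuse the invariant-percolation machinery from the proof of Theorem~\ref{thm:SRWonBRWtrace}, but now exploit the quantitative conclusion of the Häggström–Peres theory: an infinite cluster of the invariant percolation $\T_N$ on a homogeneous tree has branching number strictly larger than~$1$, and in fact---by choosing $N$ large---we may ensure the exponential growth rate of $\T_N$ is bounded below by a constant $r_0>1$ that does not depend on the realization. Concretely, I would first reduce to the case where the family tree $\T$ is a homogeneous tree (or contains a uniformly stretched full binary tree), exactly as at the end of the proof of Theorem~\ref{thm:SRWonBRWtrace}, via Theorem~2 of \cite{Dekking:91}; a uniform $K$-stretching only rescales the growth rate by a fixed power, so it suffices to establish exponential growth of the trace of the tree-indexed walk along the (stretched) binary tree.

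Next I would fix $N$ large enough that, with positive probability, the root $\ro$ lies in an infinite cluster $\C$ of $\T_N$, and moreover that $\C$ has exponential growth: there are $c_0>0$ and $r_0>1$ with $|\C \cap B^{(n)}_\T(\ro)| \geq c_0 r_0^n$ for all $n$, where $B^{(n)}_\T$ is the ball in the tree metric. This is where I would lean on the results of \cite{Hag:97}: Theorem~1.5 there gives a uniform lower bound on the growth of infinite clusters of an invariant percolation on a regular tree once the retention parameter (here governed by $N$) is large, and the finite-energy technicality is handled as before by first passing to a Bernoulli($p$)-percolation of $\T$ with $p$ close to $1$. The key observation linking this back to $G$ is that the mapping $v \mapsto S_v$ is $1$-Lipschitz from $(\T, d_\T)$ to $(G, d_G)$, since $S_{v^-} \sim S_v$; hence $S$ maps $\C \cap B^{(n)}_\T(\ro)$ into $\Tr \cap B^{(n)}$. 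The only loss is multiplicity: each vertex $x$ of the trace can be the image of many tree vertices. But the construction of $\T_N$ caps this: every edge of $G$ carries at most $N$ tree-edges of $\T_N$, so every vertex of $G$ is hit by at most $N \cdot \deg(x) \leq N \Delta$ vertices of $\C$, where $\Delta$ is the (bounded) degree bound on $G$. Therefore
$$|\Tr^{(n)}| \;\geq\; \frac{1}{N\Delta}\,|\C \cap B^{(n)}_\T(\ro)| \;\geq\; \frac{c_0}{N\Delta}\, r_0^n,$$
which is the desired exponential lower bound, with $c = c_0/(N\Delta)$ and $r = r_0$ (or $r = r_0^{1/K}$ after undoing the stretching).

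The final step is to upgrade the ``positive probability'' statement to ``almost surely.'' On the event of transience (which by Theorem~\ref{thm:rec_unimod} is, conditionally on $m>1/\rho(G)$, an almost-sure event), one argues that the trace exhibits exponential growth with probability one by an ergodicity/$0$--$1$ argument: the growth rate $\liminf_n |\Tr^{(n)}|^{1/n}$ is a tail-measurable functional of the BRW, and combining the unimodularity of the base graph with the independence built into the Galton–Watson branching, it is almost surely constant; since it is positive with positive probability, it is positive almost surely. Alternatively, one may run the above argument from a uniformly random vertex of an infinite cluster, or restart the BRW from infinitely many ``seed'' vertices along a ray (cf.\ the remark after Theorem~\ref{thm:rec_unimod}), each of which independently has a chance of generating an infinite $\T_N$-cluster, so that almost surely at least one does and its image witnesses the exponential growth. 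I expect the main obstacle to be the bookkeeping in the first route---namely verifying that after the $K$-stretching and the Bernoulli thinning, the retention marginal of $\T_{N}$ is still large enough to invoke the uniform growth bound of \cite{Hag:97}, and that the resulting growth constant is genuinely bounded away from~$1$ independently of the (random) structure of $\T$ and $G$.
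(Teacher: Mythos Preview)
Your approach is essentially the paper's: reduce to a (stretched) binary family tree via Dekking, pick $N$ large so that the invariant percolation $\T_N$ has branching number $>1$ and hence exponential growth, use the $1$-Lipschitz map $v\mapsto S_v$ to push this into $G$, and control the multiplicity by the defining bound on $\T_N$. The paper divides by $N$ (counting edges) where you divide by $N\Delta$ (counting vertices); these are equivalent up to constants.

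The one place you work harder than necessary is your final step. You restrict to the cluster $\C$ of the root, obtain exponential growth only with positive probability, and then propose a separate $0$--$1$ / seed argument to upgrade to almost sure. The paper avoids this entirely by not restricting to the root's cluster: it simply counts $|\T_N\cap B^{(n)}_\T(\ro)|$, i.e.\ all retained vertices within tree-distance $n$ of $\ro$. Since H\"aggstr\"om's result gives an infinite cluster somewhere in $\T_N$ almost surely, and that cluster sits at some finite (random) distance $d$ from $\ro$, one gets $|\T_N\cap B^{(n)}|\geq b\,r^{n}$ a.s.\ with a random constant $b$ (absorbing the shift by $d$). The proposition allows $c$ to be random, so no extra ergodicity argument is needed.
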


\begin{proof}
Let us assume that the family tree is a binary tree; the case of a general family tree is treated as in the proof of Theorem \ref{thm:SRWonBRWtrace}. There exists some $N$ such that the tree $\T_{N}$ has branching number greater than $1$ and hence grows exponentially fast. This means that  there exist some constants $b>0$ and $r>1$ such that $|\T_{N}\cap B^{(n)}|\geq b r^{n}$.
Now, by mapping $\T_{N}$ to the base graph we obtain a subgraph of the trace that we denote by  $\Tr_{N}$. Observe that each edge $\langle v^{-}, v\rangle$ is mapped to the edge $\langle S_{v^{-}}, S_{v}\rangle$ with $|v|\geq |S_{v}|$, where $|S_{v}|$ is the graph distance in the base graph  from $S_{v}$ to the root $\orig$. Eventually, $|\Tr_{N}\cap B^{(n)}|\geq |\T_{N}\cap B^{(n)}|/N\geq (b/N) r^{n}$. We conclude by observing that  $\Tr^{(n)}\supseteq\Tr_{N}^{(n)}$.
\end{proof}

\begin{lem}\label{lem:cutpoints}
The trace of a transient BRW on a URG has a.s. only finitely many cutpoints, \emph{i.e.}, points that separate the root from infinity.
\end{lem}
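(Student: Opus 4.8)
The plan is to reduce the statement to a deterministic fact about bounded-degree graphs and then quote \thmref{thm:SRWonBRWtrace}. The fact I would isolate is: an infinite, connected, locally finite graph of bounded degree having infinitely many cutpoints that separate a fixed vertex from infinity is recurrent for the SRW. Since a.s.\ $\Tr$ is infinite (a transient BRW visits infinitely many vertices), connected, of bounded degree, and --- by \thmref{thm:SRWonBRWtrace} --- transient for the SRW, the Lemma follows immediately from this fact. To prove the fact I would appeal to the Nash--Williams criterion: a network carrying pairwise disjoint finite edge cutsets $(\Pi_n)_{n\ge1}$, each separating a vertex $\orig$ from infinity, with $\sum_n\bigl(\sum_{e\in\Pi_n}c(e)\bigr)^{-1}=\infty$, is recurrent. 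On $\Tr$ the conductances equal $1$ and degrees are at most some $\Delta<\infty$, so it suffices to manufacture, from infinitely many cutpoints, infinitely many \emph{pairwise disjoint} edge cutsets of size $\le\Delta$.

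First I would attach to each cutpoint $x$ the set $S_x:=O_x\cup\{x\}$, where $O_x$ is the (finite, by the definition of a cutpoint) component of $\orig$ in $\Tr\setminus\{x\}$, and let $\Pi_x$ be the edge boundary of $S_x$ in $\Tr$. Because $O_x$ is a whole component of $\Tr\setminus\{x\}$, every edge leaving $O_x$ returns to $S_x$; hence $\Pi_x$ is exactly the set of edges at $x$ whose far endpoint lies in a component of $\Tr\setminus\{x\}$ other than $O_x$. Thus $\Pi_x\neq\emptyset$ (as $\Tr$ is infinite and connected), $|\Pi_x|\le\deg_{\Tr}(x)\le\Delta$, and removing $\Pi_x$ leaves $\orig$ in the finite piece $S_x$, so $\Pi_x$ is an edge cutset separating $\orig$ from infinity.

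The hard part will be to check that $\Pi_x\cap\Pi_y=\emptyset$ for distinct cutpoints $x\neq y$. I would first argue that of any two distinct cutpoints one lies in the $\orig$-component of the other after its removal: were this false we would have $x\notin O_y$ and $y\notin O_x$, so $y$ separates $\orig$ from $x$ and $x$ separates $\orig$ from $y$; a geodesic of $\Tr$ from $\orig$ to $x$ would then have to pass through $y$, while its initial segment from $\orig$ to $y$ --- a simple path --- would have to pass through $x$, which is impossible since $x$ is the last vertex of that geodesic. So say $x\in O_y$. If $e\in\Pi_x\cap\Pi_y$, then $e$ is incident to both $x$ and $y$, so $e=\{x,y\}$; but $e\in\Pi_y$ forces its endpoint $x$ to lie outside $S_y$, i.e.\ $x\notin O_y$, a contradiction. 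Hence the cutsets $\Pi_x$, over all cutpoints $x$, are pairwise disjoint. If there were infinitely many cutpoints, then $\sum_x1/|\Pi_x|\ge\sum_x1/\Delta=\infty$, and Nash--Williams would make the SRW on $\Tr$ recurrent, contradicting \thmref{thm:SRWonBRWtrace}.

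Beyond this disjointness claim, the only delicate point is keeping the bookkeeping around ``separates $\orig$ from infinity'' honest when $\Tr$ is not one-ended (finite side of the cut containing $\orig$, the infinite part elsewhere). One could also replace \thmref{thm:SRWonBRWtrace} by $p_c(\Tr)<1$ from \thmref{thm:critperc}: fixing $p>p_c(\Tr)$, the a.s.\ infinite Bernoulli$(p)$-cluster on $\Tr$ must cross the cutset $\Pi_x$ at all but finitely many cutpoints $x$; but each $\Pi_x$ is entirely closed with probability $\ge(1-p)^{\Delta}>0$, the $\Pi_x$ are disjoint hence these events independent, so by Borel--Cantelli infinitely many $\Pi_x$ are entirely closed --- whence only finitely many cutpoints.
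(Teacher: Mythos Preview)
Your main argument is correct and takes a genuinely different route from the paper. The paper does not pass through \thmref{thm:SRWonBRWtrace} at all; instead it works directly with the subtree $\T_N$ built in the proof of \thmref{thm:SRWonBRWtrace}. Since $br(\T_N)>1$, every cutset $\Pi$ of $\T_N$ satisfies $|\Pi|\ge \eps\lambda^{h(\Pi)}$ for some $\lambda>1$, $\eps>0$. A cutpoint of $\Tr_N$ at distance $n$ from $\orig$ would pull back to a cutset of $\T_N$ of size at most $N$ and height at least $n$, so only finitely many $n$ are possible; finally, cutpoints of $\Tr$ must lie in $\Tr_N$ because $\orig$ is already connected to infinity through $\Tr_N$. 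Your approach instead isolates a purely deterministic statement --- a transient, connected, bounded-degree graph has only finitely many cutpoints separating a fixed vertex from infinity --- and deduces it from Nash--Williams once you have built the disjoint edge cutsets $\Pi_x$. This is cleaner and more portable (it would apply to any model for which one can prove the analogue of \thmref{thm:SRWonBRWtrace}), while the paper's argument stays closer to the BRW structure and yields the slightly finer information that cutpoints are confined to a bounded neighbourhood of $\orig$.

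One remark on your alternative percolation argument: as written it has a small gap. The a.s.\ infinite Bernoulli$(p)$ cluster need not cross the cutsets $\Pi_x$ at all --- it may live entirely in $\Tr\setminus\bigcup_x S_x$, which can be nonempty when $\Tr$ has several ends. What you actually want is the event $\{\orig$ lies in an infinite cluster$\}$, which has positive probability for $p>p_c(\Tr)$; on that event the cluster of $\orig$ must cross every $\Pi_x$, and your Borel--Cantelli contradiction goes through. Since this is only offered as a second proof, it does not affect the validity of your Nash--Williams argument.
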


\begin{proof}
Let us put in the setting of the proof of Proposition \ref{prop:expgrowth} and  choose $N$ such that  $\T_{N}$ has a branching number greater than $1$.
Recall a definition of the branching number of a tree that uses cutsets of trees. A cutset $\Pi$ of a tree is a subset of vertices that separates the root $\ro$ from infinity, \emph{i.e.}, every infinite geodesic path starting from $\ro$ goes through a vertex of $\Pi$. Now the branching number of a tree $T$ is defined as
\begin{equation*}
br(T):=\sup\left\{\lambda\geq 1:~\inf_{\Pi \mbox{  cutset}} \sum_{v\in \Pi}\lambda^{-|v|}>0\right\}.
\end{equation*} Since $br(\T_{N})>1$ there exists some $\lambda>1$ and $\eps>0$ such that  $\sum_{v\in \Pi}\lambda^{-|v|}>\eps$ for all cutsets $\Pi$. Define the height of a cutset $\Pi$ as $h(\Pi)=\min \{|v|:~v\in \Pi\}$. Now, $\sum_{v\in \Pi} \lambda^{-|v|}\leq |\Pi| \lambda^{-h(\Pi)}$ and hence $|\Pi|\geq \eps \lambda^{h(\Pi)}$.

 Assume that $x\in \Tr_{N}$ is a cutpoint and denote $n$ for its distance to the origin in $\Tr_{N}$. Observe that all $v\in \T_{N}$ are mapped to a vertex $S_{v}$ with $|S_{v}|\leq |v|$. Hence, there exists some cutset of $\T_{N}$ such that $h(\Pi)\geq n$ and $|\Pi|\leq N$. Together with $|\Pi|\geq \eps \lambda^{h(\Pi)}$ for all cutsets $\Pi$, this implies that the to $\T_{N}$ corresponding trace $\Tr_{N}$ has a.s. only a finite number of cutpoints. We can conclude by observing that  no vertex of $\Tr\setminus \Tr_{N}$ can be a cutpoint of $\Tr$; the root $\orig$ is connected to $\infty$ through $\Tr_{N}$.
\end{proof}

The next result concerns ends of the trace of BRWs. Recall the basic notations. 
A ray is an infinite path $(x_{0},x_{1},\ldots)$ of distinct vertices. A set $F$ separates sets of vertices $A$ and $B$ if any path from any vertex in $A$ to any vertex in $B$ contains an element of $F$. Two rays are equivalent if they cannot be separated by a finite set of vertices. The equivalence  classes of rays are called ends. 

There are several approaches to define the thickness of ends; we follow the one that uses cuts, e.g. see \cite{KrMo}. Denote $\partial C$ the (interior) boundary of a subset of vertices, \emph{i.e.}, $\partial C:=\{x\in C:~\exists  y\notin C  \mbox{ with }y\sim x\}$. If $\partial C$ is finite we call the set $C$ a cut.
 A ray $R$ lies in a set of vertices $C$ if all but finitely many vertices of $R$ are elements of $C$. An end $\omega$ lies in a set of vertices $C$ if all rays in $\omega$ lie in $C$. We say that a set of vertices $F$ separates two ends $\omega_{1}$ and $\omega_{2}$ if it separates any ray in $\omega_{1}$ from any ray in $\omega_{2}$. If a ray $R$ lies in a cut then the end which $R$ belongs to lies in $C$. Furthermore, each pair of distinct ends is separated by a finite set of vertices. 
The thickness of an end $\omega$ is the smallest  $t(\omega)\in \N\cup\{\infty\}$ such that there is a descending sequence of cuts $(C_{n})_{n\in\N}$ which contain $\omega$, such that $|\partial C_{n}|\leq t(\omega)$, and $\bigcap_{n} C_{n}=\emptyset$. An end is called thin if $t(\omega)<\infty$ and thick otherwise.

\begin{lem}\label{lem:ends}
The trace of a BRW on a URG with infinitely  many thin ends and no thick ends has a.s.~infinitely many ends.
\end{lem}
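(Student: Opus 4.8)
The plan is to reduce matters, as usual, to a transient BRW — if the BRW is recurrent then $\Tr=G$, which has infinitely many ends by hypothesis — and to assume the family tree is binary, the general case following as in the proof of \thmref{thm:SRWonBRWtrace}. I then fix $N$ large enough that the invariant percolation $\T_N$ of that proof has an infinite cluster of branching number strictly greater than $1$; as in the proofs of Proposition~\ref{prop:expgrowth} and \lemref{lem:cutpoints} I assume this cluster contains the root and keep calling it $\T_N$, and I write $\phi\colon\T_N\to\Tr$, $v\mapsto S_v$, for the mapping into the base graph. Three elementary facts about $\phi$ will be used: it sends edges to edges and satisfies $d_G(\orig,S_v)\le|v|$; there is a constant $C$, depending only on $N$ and the degree bound, with $|\phi^{-1}(x)|\le C$ for every $x$, since within $\T_N$ every edge of $\Tr$ is used at most $N$ times; and transience of the BRW means every vertex of $G$ is visited only finitely often in total, so each $\phi^{-1}(x)$ is finite and hence the path $\phi(R)$ in $\Tr$ tends to infinity for \emph{every} ray $R$ of $\T_N$ from $\ro$ — a single finite set cannot meet $\phi(R)$ infinitely often.

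Assume, for contradiction, that $\Tr$ has only finitely many ends $\eta_1,\dots,\eta_k$, and pick a finite $F\ni\orig$ so that the infinite components $A_1,\dots,A_k$ of $\Tr\setminus F$ are one-ended and correspond to $\eta_1,\dots,\eta_k$. The key observation — and the only place the hypothesis ``no thick ends'' enters — is that the single end of each $A_i$ is a \emph{thin} end of $A_i$. Indeed, two rays of $A_i$ cannot be separated by a finite subset of $A_i$, hence a fortiori not by a finite subset of $G$, so all rays of $A_i$ lie in one end $\xi_i$ of $G$; by hypothesis $\xi_i$ is thin, so there are cuts $D_n^i\downarrow\emptyset$ of $G$ with $|\partial D_n^i|\le t$ (a bound uniform over the finitely many $i$) through which $\xi_i$ escapes, and then $C_n^i:=D_n^i\cap A_i$ is a decreasing sequence of cuts of $A_i$ with empty intersection, $|\partial_{A_i}C_n^i|\le t$, through which the end of $A_i$ escapes.

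Next, put $\Sigma_n:=\bigcup_{i=1}^k\partial_{A_i}C_n^i$, a set of at most $kt$ vertices. For $n$ large, $\Sigma_n$ separates $\orig$ from infinity in $\Tr$: a ray of $\Tr$ from $\orig$ eventually stays in some $A_i$, enters $A_i$ through one of the finitely many vertices adjacent to $F$ — which, for $n$ large, lies outside $C_n^i$ — and then runs into $C_n^i$, hence meets $\partial_{A_i}C_n^i$. Also $\Sigma_n\to\infty$ in $G$, because $\Sigma_n\subseteq\bigcup_iD_n^i$ and $\bigcap_nD_n^i=\emptyset$. Pulling back, $\Pi_n:=\phi^{-1}(\Sigma_n)$ is a cutset of $\T_N$, since for every ray $R$ the path $\phi(R)$ runs from $\orig$ to infinity in $\Tr$ and must cross $\Sigma_n$; moreover $|\Pi_n|\le C|\Sigma_n|\le ktC$ and, since level-$\ell$ vertices of $\T_N$ map into the finite $\ell$-ball of $G$, $\Pi_n\to\infty$ in $\T_N$. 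This contradicts the elementary fact that a tree with infinitely many ends has no cutsets of bounded size arbitrarily far from the root: since $br(\T_N)>1$ forces $\T_N$ to have infinitely many ends, choose $ktC+1$ distinct ends of $\T_N$ and let $L$ be the largest level at which two of them agree; for $n$ large $\Pi_n$ lies above level $L$, so, being a cutset, it meets each of these $ktC+1$ pairwise-disjoint rays in a vertex above level $L$, yielding $ktC+1$ distinct vertices of $\Pi_n$, which is impossible.

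The step I expect to be the main obstacle is precisely this thinness transfer: ends of $\Tr$ are not ends of $G$, so one must argue with two ambient graphs simultaneously, noting that a one-ended piece of $\Tr$ nevertheless faces a single end of $G$, which is thin, and that intersecting the thin $G$-cuts with that piece keeps their boundary uniformly bounded; one must also check that the entry points of the rays into the $A_i$ eventually leave the cuts $D_n^i$. The remaining ingredients are bookkeeping: the finite-to-one, distance-nonincreasing nature of $\phi$, the remark that transience forces \emph{every} branch of the family tree (not just a fixed geodesic) to escape to infinity, and the elementary tree fact invoked at the end.
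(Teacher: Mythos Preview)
Your proof is correct and follows the same contradiction strategy as the paper: assume finitely many ends of $\Tr$, use the thin-ends hypothesis on $G$ to produce uniformly bounded separating sets $\Sigma_n$ of $\Tr$ drifting to infinity, pull them back through the tree-indexed map to cutsets $\Pi_n$ of $\T_N$, and contradict $br(\T_N)>1$. The paper's version is terser—it writes the cuts $C_n^{(i)}$ for the ends $\omega_i$ of $\Tr$ directly and invokes the estimate $|\Pi|\ge\eps\lambda^{h(\Pi)}$ from the proof of \lemref{lem:cutpoints}—while you spell out the ``thinness transfer'' from $G$ to the one-ended pieces $A_i$ of $\Tr$ explicitly and replace the branching-number inequality by the more elementary pigeonhole observation that $ktC+1$ disjoint rays above level $L$ force $|\Pi_n|>ktC$. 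Your extra care on the transfer step (showing that all rays of $A_i$ lie in a single thin end $\xi_i$ of $G$, then intersecting the $G$-cuts with $A_i$) fills in exactly the point the paper leaves implicit, and the remaining bookkeeping—finite-to-one-ness of $\phi$ with the bound $|\phi^{-1}(x)|\le C$, escape of $\phi(R)$ to infinity by transience, $\Sigma_n$ eventually separating $\orig$—is handled correctly.
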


\begin{proof}
We proof the claim by contradiction. Assume that the trace has finitely many ends, say $\omega_{1},\ldots, \omega_{k}$. Then for each of these ends there exists a sequence of cuts $C_{n}^{(i)}$,  $1\leq i \leq k$. Now, $C_{n}=\bigcup_{i} C_{n}^{(i)}$ defines a cut that separates $0$ from infinity. Note that $|C_{n}|\leq \sum_{i} t(\omega_{i})$ for all $n$. Arguing as in the proof Lemma \ref{lem:cutpoints} leads to a contradiction with the fact that the branching number of $\T_{N}$ is greater than $1$ for some $N$.
\end{proof}

The next observation is about the spectral radius of the trace of a transient BRW (say with mean offspring $m$) on a general graph. Together with Theorem \ref{thm:rec_trans} it implies that for every $m'>1$ with positive probability the trace is recurrent for a BRW with mean offspring $m'$.
\begin{lem}\label{lem:weakrec}
Let $G=(V,E)$ be a locally finite graph and $\Tr$ the trace of a transient BRW on $G$. Denote $\rho(\Tr)$ the
spectral radius of the SRW on $\Tr$. Then, for all $\eps
>0$
\begin{equation}
\P(\rho(\Tr)\geq 1-\eps)>0.
\end{equation}
\end{lem}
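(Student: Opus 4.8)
The plan is to exhibit, inside the trace, a subgraph on which the simple random walk has spectral radius arbitrarily close to $1$, and then use the fact that adding vertices/edges to a subgraph cannot decrease the spectral radius (monotonicity of $\rho$ under inclusion of subgraphs; see Section~6 of \cite{woess}). The natural candidate for such a subgraph is a long subdivided path: a segment of $\Z$ of length $L$ has spectral radius $\cos(\pi/(L+1))$, which tends to $1$ as $L\to\infty$, and more generally a path of length $L$ subdivided or not sitting inside a bounded-degree graph still has spectral radius at least $1-C/L^{2}$. So it suffices to show that with positive probability the trace contains a simple path of length $L$ for every fixed $L$; in fact, since the BRW is transient and survives, the trace is an infinite connected graph, so it automatically contains arbitrarily long simple paths. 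Thus $\rho(\Tr)=1$ would follow if length alone sufficed — but a long path embedded in a graph of bounded degree only gives $\rho(\Tr)\ge \cos(\pi/(L+1))$ when that path is an \emph{isometrically embedded} line or at least an \emph{induced} subgraph that is itself a path; one must be a little careful, because the path could have chords in $\Tr$. The cleanest route is instead: with positive probability the trace contains an induced copy of the half-line $\N$ (equivalently, a ray along which $\Tr$ has no shortcuts), and a half-line has $\rho=1$.

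Concretely, I would argue as follows. Fix $\eps>0$ and choose $L$ with $\cos(\pi/(L+1))\ge 1-\eps$. Consider the event that the first $L+1$ steps of the BRW are generated by a single non-branching lineage (probability at least $p_{1}^{L}>0$ is \emph{not} available since we assumed $p_{1}<1$ but $p_1$ could be $0$; instead use that with positive probability the designated leftmost child lineage takes $L$ prescribed steps) which walks along a self-avoiding path $x_{0}=\orig, x_{1},\dots,x_{L}$ in $G$, \emph{and} that all other particles die out before ever leaving a neighborhood of this path — more precisely, that the trace restricted to $\{x_{0},\dots,x_{L}\}$ is exactly the path and no edge of $G$ among these vertices other than $\langle x_{i},x_{i+1}\rangle$ is ever traversed. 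Since $G$ has bounded degree, there are only finitely many "forbidden" edges, and each particle subtree has a uniformly positive probability of avoiding any prescribed finite edge set in finite time; hence this compound event has positive probability. On this event $\Tr$ contains $\{x_{0},\dots,x_{L}\}$ as an induced path, so $\rho(\Tr)\ge \rho(\text{path of length }L)=\cos(\pi/(L+1))\ge 1-\eps$.

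The main obstacle is the bookkeeping in the second half of the last paragraph: one wants a genuinely positive-probability event on which the trace behaves like a bare path near $\orig$, and this requires controlling \emph{all} the particles of the BRW, not just one lineage, over an infinite time horizon. The trick is that on the complement of the chosen path the process is again a BRW (branching off the path at each $x_i$), and transience does \emph{not} help here — we cannot let these particles wander freely, since a transient BRW may still traverse a given nearby edge with positive probability. So the correct formulation restricts to a \emph{finite} time window: require the prescribed lineage to reach $x_{L}$ by some time $T$, and require that up to time $T$ no forbidden edge is used, which has positive probability; but then after time $T$ nothing prevents a forbidden edge from being used. This is why one must instead phrase the event as: the designated lineage traverses the path, and the union of all other particle trajectories (over all time) avoids the finite set of forbidden edges. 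That this still has positive probability follows because it is a countable intersection of events whose probabilities are bounded below — one shows that conditioning the entire future BRW to avoid a fixed finite edge set has positive probability by a standard coupling/harmonic-function argument (the escape probability from a finite set is positive for a transient SRW, and branching only helps the avoidance if we insist on small populations; the careful version restricts also the total progeny, which is finite with positive probability). Handling this requires the same invariant-percolation / mass-transport machinery used earlier only indirectly; the essential input is elementary, namely positivity of $\rho$ of a finite path together with monotonicity under subgraphs, plus positivity of a finite-horizon avoidance event for the BRW.
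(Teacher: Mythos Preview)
Your overall strategy matches the paper's: show that with positive probability the trace contains a ``line segment'' of length $k$ (a path whose internal vertices have degree $2$ in $\Tr$), whence $\rho(\Tr)\ge\rho(Q_{L_k})\to 1$ via the finite approximation $\rho(P)=\sup_{|F|<\infty}\rho(P_F)$. However, your execution has a genuine gap, and one preliminary error.

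The preliminary error: the monotonicity you invoke at the outset (``adding vertices/edges cannot decrease $\rho$'') is false for SRW --- a regular tree contains a copy of $\Z$ yet has spectral radius strictly less than $1$. What is true, and what the paper uses, is the substochastic finite approximation $\rho(P_\Tr)\ge\rho((P_\Tr)_F)$ for finite $F$. For this to yield $\cos(\pi/(k+1))$ you need the internal path vertices to have degree exactly $2$ \emph{in the trace}, not merely that the path be induced (no chords among the $x_i$); otherwise the restricted kernel has entries $1/\deg_\Tr(x_i)$ rather than $1/2$, and the bound does not go to $1$.

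The main gap is the one you yourself flag but do not close: controlling the entire BRW over infinite time so that no particle ever creates an extra edge at an internal path vertex. Your finite-horizon attempt fails after time $T$, and the appeal to a ``coupling/harmonic-function argument'' or ``invariant-percolation/mass-transport machinery'' is not a proof. The paper's solution is short and uses transience directly. Since the BRW is transient, there exists a vertex $x_k$ with $|x_k|=k$ such that a BRW started at $x_k$ avoids the ball $B^{(k-1)}$ with positive probability (otherwise the BRW would hit $B^{(k-1)}$ a.s.\ from every vertex, contradicting transience). Let $i_{\min}=\min\{i:p_i>0\}$ and fix a path $\orig=x_0,x_1,\ldots,x_k$. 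With positive probability the following happens: at each of the first $k$ steps every existing particle has exactly $i_{\min}$ offspring and \emph{all} of them jump to the next vertex of the path. At time $k$ all $i_{\min}^{k}$ particles sit at $x_k$; with positive probability each of the $i_{\min}^{k}$ independent BRWs launched from there avoids $B^{(k-1)}$ forever. On this event the internal path vertices have degree exactly $2$ in $\Tr$. The two missing ideas are thus: (i) march \emph{all} particles, not just one lineage, along the path (this disposes of the side-branch bookkeeping and works even when $p_1=0$), and (ii) let transience itself supply the infinite-time avoidance from the far endpoint.
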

\begin{proof}
Let  $F$ be a subset of $V$ and let $P_F$ be the substochastic matrix over $F$ defined as $p_F(x,y)=p(x,y)$ if $x,y\in F$ and $0$ otherwise. We use the \emph{finite approximation} property of the spectral radius: $\rho(P)=\sup_{|F|<\infty} \rho(P_F)$, where $\rho(P_{F})$ is the largest eigenvalue of $P_{F}$. Denote $Q$ for the transition matrix of the SRW on $\Z$ and let $L_k$ be the line segment of length $k$. It is well known that $\rho(Q)=1$; hence for each $m>1$ there exists some $k$ such that $\rho(Q_{L_k})>1/m.$ 

We say that the trace contains a line segment $L_{k}$ if there exists a sequence of vertices $x_{0}, x_{1},\ldots, x_{k}$ such that $x_{i}\sim x_{i+1}$ for $i=0,\ldots,k-1$ and $deg(x_{i})=2$ for all $i=1,\ldots,k-1$. It remains to prove that for each $k$ the trace contains line segments of length $k$ with positive probability.

Since the BRW is transient there exists some vertex $x_{k}$ with $|x_{k}|=k$  such that the BRW started in $x_{k}$ does not hit the ball $B^{(k-1)}$ around $\orig$. 
Let $(\orig, x_{1}, x_{2}, \ldots, x_{k})$ be a path from $\orig$ to $x_{k}$ of length $k$ and $i_{min}$ the smallest integer $i$ such that $p_{i}>0$. Now, with positive probability the following can happen: the BRW starts in $\orig$  with one particle that produces $i_{min}$ offspring. These offspring particles all jump to $x_{1}$ and each of them  produces $i_{min}$ offspring that all jump to $x_{2}$. We proceed in this way such that at time $k$ all existing  $i_{min}^{k}$ particles are at $x_{k}$ and no vertex outside the set $\{\orig, x_{1}, x_{2}, \ldots, x_{k}\}$ was visited. The rest of the process behaves like $i_{min}^{k}$ independent BRWs started at $x_{k}$. Hence, with positive probability no particle ever return to $B^{(k-1)}$ and the trace contains a line segment of length $k$.
\end{proof}

\begin{thm}\label{thm:tr=unimod}
The trace of a UBRW on a URG is a.s. a unimodular random graph.
\end{thm}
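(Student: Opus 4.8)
The plan is to realise the trace as the image of the family tree under the canonical map $v\mapsto S_v$ and to push the Mass--Transport Principle from the (unimodular) family tree down to the base graph; the computation will show that the resulting unimodular object is the law of $[\Tr,\orig]$ biased by the reciprocal of the number of visits to $\orig$, which is the ``appropriate biasing of the root''. The recurrent case is disposed of first: by \thmref{thm:rec_unimod}, for $\mu$-almost every base graph the UBRW is either transient or strongly recurrent, and in the latter case a.s.\ every vertex is visited infinitely often and every incident edge is eventually traversed (the walk from a recurrently visited vertex has infinitely many independent opportunities for each neighbour), so $\Tr=G$ and $[\Tr,\orig]=[G,\orig]$ is unimodular by hypothesis. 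So assume transience; then every $x\in V(\Tr)$ is visited finitely often, the fibre $T_x=\{v\in\T:S_v=x\}$ is finite, and $1\le|T_\orig|<\infty$ a.s.

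Next I would package the embedded family tree as one random network $\wh{\T}$: take $\T$ (distributed as $\UGW$) rooted at $\ro$, put it on the vertex set $V(\T)\sqcup V(G)$ together with the tree edges, the edges of $G$, and a bridge $\{v,S_v\}$ for each $v\in\T$, and mark every vertex by whether it lies in the tree part or the graph part. From the isomorphism class of $(\wh{\T},\ro)$ one recovers the trace (its vertices are the graph-part vertices incident to a bridge, and $\{x,y\}\in E(\Tr)$ iff some bridge-neighbour of $x$ is $\T$-adjacent to some bridge-neighbour of $y$), the map $v\mapsto S_v$, and every fibre size $|T_x|$ (the number of bridges at $x$); in particular $g(\wh{\T},u,w):=f(\Tr,S_u,S_w)/(|T_{S_u}|\,|T_{S_w}|)$ is a Borel function of the doubly-rooted network. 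The heart of the matter is then the \emph{Claim}: $(\wh{\T},\ro)$ is a unimodular random network.

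Granting the Claim, fix a Borel $f:\G_{**}\to[0,\infty]$ and apply the Mass--Transport Principle for $\wh{\T}$ to the transport that sends mass $g(\wh{\T},u,w)$ from tree-vertex $u$ to tree-vertex $w$ and nothing in all other cases. Summing $g(\wh{\T},\ro,w)$ over $w\in\T$ and grouping the $w$'s by the value $x=S_w$ collapses the fibre $T_x$ (there are $|T_x|$ such $w$, each contributing $f(\Tr,\orig,x)/(|T_\orig|\,|T_x|)$), leaving $|T_\orig|^{-1}\sum_{x\in V(\Tr)}f(\Tr,\orig,x)$; the same manipulation with $g(\wh{\T},w,\ro)$ leaves $|T_\orig|^{-1}\sum_{x\in V(\Tr)}f(\Tr,x,\orig)$. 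Hence
\[
\E\Big[\tfrac1{|T_\orig|}\sum_{x\in V(\Tr)}f(\Tr,\orig,x)\Big]=\E\Big[\tfrac1{|T_\orig|}\sum_{x\in V(\Tr)}f(\Tr,x,\orig)\Big],
\]
which says exactly that the probability measure obtained from the law of $[\Tr,\orig]$ by biasing by $1/|T_\orig|$ and normalising by $Z:=\E[1/|T_\orig|]\in(0,1]$ obeys the Mass--Transport Principle, i.e.\ is unimodular; since $0<1/|T_\orig|<\infty$ a.s.\ it is mutually absolutely continuous with the law of the trace, which is the assertion of the theorem (with the indicated root-biasing).

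The main obstacle is the Claim. I would prove it by checking the Mass--Transport Principle for $\wh{\T}$ directly, reducing a transport between two marked vertices to a sum over the finite tree-path joining them: along such a path the walk is an ordinary SRW segment, reversing it produces precisely the vertex-degree ratios against which SRW on $G$ is reversible, and the branching at the interior vertices of the path contributes the usual Galton--Watson weighting; the point of using $\UGW$ rather than $\GW$ is that its degree-biased root distribution absorbs exactly the leftover degree factor at the endpoints, mirroring the $\UGW$--$\AGW$ correspondence recalled in \secref{subsec:URG}. Equivalently, the Claim is the statement that a SRW indexed by a $\UGW$ tree over a unimodular base defines a unimodular decorated graph, which is the tree-valued counterpart of the stationarity of the environment process used in the proof of \thmref{thm:rec_unimod} and should also be obtainable from the general machinery of unimodular networks.
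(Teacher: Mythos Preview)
Your approach is essentially the same as the paper's: realise the trace as the image of the (unimodular) labelled family tree and push the Mass--Transport Principle downward by summing over fibres $T_x=E(x)$ with suitable weights. The paper carries this out tersely, first in the Cayley case with the weight $|E(x)|^{-1}$ and then in the general case with $|E(x)|^{-2}$, appealing to the unimodularity and independence of the family-tree measure and the base-graph measure. Your symmetric weight $|T_{S_u}|^{-1}|T_{S_w}|^{-1}$ is cleaner: it makes the cancellation transparent and isolates the residual factor $|T_\orig|^{-1}$, so that you can read off explicitly that the unimodular object is the law of $[\Tr,\orig]$ biased by the reciprocal of the number of visits to the root---which is exactly the ``appropriate biasing of the root'' announced in the abstract. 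Your separate treatment of the recurrent case is a small addition the paper omits.

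One point to tighten: your Claim that the composite network $(\wh{\T},\ro)$ is unimodular is stronger than what you actually use and is awkward as stated. Your vertex set is $V(\T)\sqcup V(G)$, so unimodularity would in particular have to survive re-rooting at an \emph{unvisited} vertex of $G$; there is no reason to expect this without further biasing on the graph side. Fortunately your transport sends mass only between tree vertices, so all you need is the MTP restricted to tree roots, i.e.\ the unimodularity of the $\UGW$ tree decorated by the positions $v\mapsto S_v$ (equivalently, by the i.i.d.\ edge labels in the Cayley case). That is precisely the object the paper works with, and it is exactly what your final paragraph is sketching. If you replace the Claim by this weaker statement the argument goes through unchanged and matches the paper's.
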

\begin{proof}
As in the proof of Theorem \ref{thm:SRWonBRWtrace} we consider the mapping of the family tree to the graph. Now, every set of edges in the tree that is mapped to the same edge in the base graph gets the same label. In other words, all elements of $\T_{\langle x,y  \rangle}$ are labelled the same.  This labelling is invariant under re-rooting and thus the labelled tree is a unimodular random  labelled tree. This shows in particular that the trace does not depend on the choice of the root.

Another way to see this is to check that the generalized MTP, Equation  \eqref{eq:defMTP}, holds. First, consider BRWs on Cayley graphs. We denote by $\T$ the labelled \UGW-tree with corresponding measure $\nu$; the labels are taken from the set of generators according to the definition of the UBRW as a tree-indexed random walk. Define a labelled version of the trace: an edge of the trace is labelled by the number of traversal of the BRW. We will prove that this labelled trace is a random unimodular network. We also use $\Tr$ for the notation of the labelled version and write $\mu$ for its probability measure.

We write $\{\T \rightsquigarrow \Tr,\ro\}$ or just $\{\T \rightsquigarrow \Tr\}$  for the set of rooted trees that generate  the rooted trace $\Tr$. The root of $\T$ is denoted by $\ro$ and the one of $\Tr$ by $\orig$. For any given $\Tr,$ $\tilde\T\in\{ \T \rightsquigarrow \Tr\}$, and $x\in \Tr$ let $E(x)$ be the set of vertices of $\tilde\T$ that map to $x$ and define
$$g(\tilde\T,\ro,v)=f(\Tr,\orig,x) |E(x)|^{-1}, \mbox{ if } v\in E(x), $$ 
and $g(\tilde\T,\ro,v)=0$ otherwise. Since $|E(x)|$ is constant and finite on $\{\T \rightsquigarrow \Tr\}$ for a given $x$ the latter is well defined. Now,
\begin{eqnarray*} & &\int_\Tr \sum_{x\in \Tr} f(\Tr,\orig,x) d\mu[\Tr,\orig]= \int_{\Tr}  \sum_{x\in
\Tr} f(\Tr,\orig,x) d\nu[\T\rightsquigarrow \Tr,\ro]\\
& & = \int_{\Tr}\int_{\tilde\T\in \{\T \rightsquigarrow \Tr\}} \sum_{x\in \Tr}  \sum_{v\in E(x)}  g(\tilde\T,\ro, v) \frac{d\nu[\tilde\T\in\{ \T\rightsquigarrow \Tr,\ro\}]}{\nu[\T \rightsquigarrow \Tr,\ro]} d\nu[\T\rightsquigarrow \Tr,\ro]\\
& &  =\int_{\T} \sum_{v\in \T} g(\T,\ro, v) d\nu[\T,\ro]\\
\end{eqnarray*}
and unimodularity of $\mu$ follows by unimodularity of $\nu$. The proof for the more general case of unimodular random graphs is in the same spirit. Denote by $BRW_{G,\T}$ the measure for the tree-indexed process with family tree $\T$ and base graph $G$. Let $\Tr$ be a labelled trace and define for $(G,\orig)$, $(\T,\ro)$ and $x\in G,~v\in\T$
$$ g(G,\T,\orig,x,\ro,v)=\int f(\Tr,\orig,x) |E(x)|^{-2} dBRW_{G,\T},\mbox{ if } v\in E(x), $$ and $0$ otherwise.
We can conclude as above using the unimodularity and independence of the measures of the family tree and the base graph.
\end{proof}

Now, we just have to combine Lemma \ref{lem:weakrec}  and Theorem \ref{thm:rec_unimod}:
\begin{cor}\label{cor:tr_rec}
Consider a transient BRW on a URG. Then, the spectral radius of the trace  is a.s.~$1$.  Furthermore, every BRW on a.e.~trace is strongly recurrent. 
\end{cor}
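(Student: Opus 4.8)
The plan is to combine the two ingredients the corollary explicitly names. First I would establish the statement about the spectral radius. By Lemma~\ref{lem:weakrec} we know that $\P(\rho(\Tr)\geq 1-\eps)>0$ for every $\eps>0$. To upgrade this from positive probability to almost sure equality with $1$, I would invoke the $0$--$1$ behaviour that comes with unimodularity. Indeed, by Theorem~\ref{thm:tr=unimod} the (appropriately rooted) trace is a.s.\ a unimodular random graph, and $\rho(\cdot)$ is an isomorphism-invariant of the rooted graph. Hence, under an extremal unimodular measure (or after disintegrating $\mu$ into its ergodic components and arguing componentwise), $\rho(\Tr)$ is a.s.\ a deterministic constant $\rho_0$. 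But Lemma~\ref{lem:weakrec} shows that for the base-graph event of transience, the trace has $\rho$ at least $1-\eps$ with positive probability, which is incompatible with $\rho_0<1-\eps$ once we pass the bound through the ergodic decomposition; letting $\eps\to 0$ forces $\rho_0=1$. Since always $\rho\leq 1$, we get $\rho(\Tr)=1$ a.s.

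The subtlety in the previous paragraph is that Lemma~\ref{lem:weakrec} is stated for BRWs on a \emph{fixed} locally finite graph $G$, whereas here the base graph is itself random. I would address this by conditioning on the base graph $G$: for $\mu$-a.e.\ $G$ on which the BRW is transient, Lemma~\ref{lem:weakrec} applies verbatim and yields $\P_G(\rho(\Tr)\geq 1-\eps)>0$. Integrating over $\mu$, the unconditional event $\{\rho(\Tr)\geq 1-\eps\}$ has positive probability, and then the unimodularity/ergodicity argument of Theorem~\ref{thm:tr=unimod} (using that $\rho$ is a tail-type invariant constant on ergodic components) promotes this to a.s.\ equality. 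I expect this matching of the ``fixed graph'' and ``random graph'' frameworks, i.e.\ making precise the ergodic-decomposition step, to be the main obstacle; everything else is bookkeeping.

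For the second assertion, I would apply Theorem~\ref{thm:rec_unimod} to the trace. By Theorem~\ref{thm:tr=unimod}, $\Tr$ (with the biased root) carries a unimodular measure, so Theorem~\ref{thm:rec_unimod} applies: for a.e.\ realization of $\Tr$, any BRW on it is strongly recurrent as soon as $m>1/\rho(\Tr)$. But we have just shown $\rho(\Tr)=1$ a.s., so the condition $m>1/\rho(\Tr)=1$ is satisfied by \emph{every} BRW (recall the standing assumption $p_0=0$, $p_1<1$, hence $m>1$). Therefore every BRW on a.e.\ trace is strongly recurrent. One small point to check is that the hypotheses of Theorem~\ref{thm:rec_unimod} only require a unimodular base measure and do not implicitly require bounded degree or anything the trace might fail; since the trace is a subgraph of a bounded-degree graph it inherits bounded degree, so no difficulty arises there. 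Combining the two parts completes the proof.
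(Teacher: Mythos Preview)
Your overall plan matches the paper exactly: use Lemma~\ref{lem:weakrec} together with Theorem~\ref{thm:tr=unimod} to get $\rho(\Tr)=1$ a.s., then feed this into Theorem~\ref{thm:rec_unimod} (applied to the unimodular trace) to obtain strong recurrence of every BRW. The second half of your argument is identical to the paper's.

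The one genuine difference is in how the upgrade from ``positive probability'' to ``almost surely'' is carried out for the spectral radius. You work with the global, re-rooting invariant quantity $\rho(\Tr)$ and invoke ergodic decomposition, and you correctly flag that matching the conditional statement of Lemma~\ref{lem:weakrec} (for fixed $G$) with the ergodic components of the \emph{trace} measure is the delicate step. The paper avoids this detour: it goes back into the \emph{proof} of Lemma~\ref{lem:weakrec} and extracts the sharper local statement that with positive probability the \emph{root} of the trace is the endpoint of a line segment $L_k$. Since the trace is unimodular (Theorem~\ref{thm:tr=unimod}), the re-rooting invariant event ``the graph contains an $L_k$'' then holds a.s., and the finite-approximation property of $\rho$ immediately gives $\rho(\Tr)\geq\rho(Q_{L_k})>1-\eps$ a.s. In other words, the paper turns the positive-probability information into a \emph{vertex property of the root}, which unimodularity propagates to the whole graph, rather than arguing abstractly that $\rho$ is constant on ergodic components. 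This is more direct and removes the bookkeeping you identified as the main obstacle.
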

\begin{proof} As in the proof  of Lemma \ref{lem:weakrec} let $\eps>0$ and $k$ such that the spectral radius of the line segment $L_{k}$ is greater than $1-\eps$.  The proof of Lemma \ref{lem:weakrec} shows that the root of the trace belongs to a line segment $L_{k}$ with positive probability. Since the trace is a unimodular random graph it follows that it contains such a line segment a.s. and hence that $\rho(\Tr)\geq 1-\eps$ a.s.. The fact that  any BRW is strongly recurrent on the trace is now a consequence of Theorem \ref{thm:rec_unimod} and Theorem \ref{thm:tr=unimod}.
\end{proof}

\begin{rem}
In Theorem \ref{thm:tr=unimod} we proved that the labelled trace is a random unimodular network. Recall that the labels have been the number of times an edge was visited. Denote $N(x,y)$ for the label of the edge $\langle x,y\rangle$ and define a random walk where the probability to take  $\langle x, y\rangle$ is proportional to $N(x,y)$, \emph{i.e.}, $p_N(x,y)=N(x,y) (\sum_{z\sim x} N(x,z))^{-1}$. The above arguments apply to this model and we obtain that for~a.a.~labelled traces of transient BRWs (on URG) the BRW with transition kernel $P_N$ and mean offspring $m>1$ is strongly recurrent.
\end{rem}

\section{Discussion}
We want to use the opportunity to discuss briefly some questions and conjectures that are related to our results
above and may stimulate further research on BRW on graphs.

\subsection{Unimodular random graphs and Cayley graphs}

In~\cite{BLS} the speed of SRWs on Bernoulli percolation clusters on non-amenable Cayley graphs was studied. The trace of BRWs on non-amenable graphs share some similarities with percolation clusters.  Even though the trace turns out to be an amenable graph we believe due to exponential growth that the SRW on the trace has positive speed:

\begin{conj}\label{conj:speed}
The SRW on a.e.~trace of a transient BRW on a URG has  positive speed.
\end{conj}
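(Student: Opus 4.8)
\smallskip\noindent\textbf{Towards a proof of Conjecture~\ref{conj:speed}.}
First, the speed is well defined. By Theorem~\ref{thm:tr=unimod} the trace is a unimodular random graph, so its degree-biased version is a stationary reversible environment for the SRW; Kingman's subadditive ergodic theorem applied to $n\mapsto d_{\Tr}(X_0,X_n)$ then yields $d_{\Tr}(X_0,X_n)/n\to\ell$ a.s.\ for a constant $\ell\geq 0$ on a.e.\ trace, and the conjecture says $\ell>0$. Note that Proposition~\ref{prop:expgrowth} alone is not enough: there are amenable graphs of exponential volume growth that are transient for the SRW yet have $\ell=0$, for instance the lamplighter $\Z_2\wr\Z$ (transient, since its return probability decays like $\exp(-cn^{1/3})$, while $X_n$ sits at distance of order $\sqrt n$ from the start). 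Since also $\rho(\Tr)=1$ a.s.\ by Corollary~\ref{cor:tr_rec}, the non-amenable mechanism (where $\ell>0$ automatically) is unavailable, and the proof must exploit structure of the trace that the lamplighter lacks.

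The plan is to obtain $\ell>0$ from non-triviality of the Poisson boundary of the SRW. Indeed, the fundamental inequality $h\leq\ell\cdot v$ --- relating the asymptotic entropy $h=\lim_n H(X_n)/n$, the speed $\ell$, and the exponential volume growth rate $v$ --- extends from transitive graphs to unimodular random graphs, and $v<\infty$ by bounded degree, so it suffices to show $h>0$ on a.e.\ trace; equivalently, to produce a non-constant bounded harmonic function on a positive-probability set of traces. Since the Liouville property is a property of the \emph{unrooted} graph, positive probability here propagates, after ergodic decomposition of the trace measure, to a.e.\ trace. (One could instead try to show a.e.\ trace has anchored expansion in the sense of Vir\'ag, which would also give $\ell>0$; we expect this but it seems harder.)

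To build such a harmonic function we use that the trace is a quotient of the Galton--Watson family tree $\T$. Write $\phi:v\mapsto S_v$. Transience of the BRW forces every vertex of $\Tr$ to be visited finitely often, so $\phi$ has finite fibres, and for $N$ large enough that $br(\T_N)>1$ the map $\phi|_{\T_N}$ has fibres of size at most $N$. The idea is to track, along the trajectory $(X_n)$, the region of the boundary $\partial\T_N$ ``pointed at'' by $\phi^{-1}(X_n)\cap\T_N$, and to show that with positive probability the walk tends to infinity while this region shrinks to a single $\xi\in\partial\T_N$; as $br(\T_N)>1$ makes $\partial\T_N$ rich, $\xi$ depends genuinely on the trajectory and defines a non-trivial bounded-harmonic invariant.

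The main obstacle is that the SRW on $\Tr$ is \emph{not} the $\phi$-image of the SRW on $\T_N$: the edges the BRW traversed more than $N$ times, together with the identifications made by $\phi$, let the walk move transversally across the tree and, a priori, forget its tree direction entirely --- exactly the mechanism behind $\ell=0$ on amenable transient graphs. Making the argument work requires showing these identifications are sparse along a typical trajectory; the natural inputs are $p_c(\Tr)<1$ (Theorem~\ref{thm:critperc}) and the \emph{strict} inequality $br(\T_N)>1$, used to argue that a macroscopic direction in $\Tr_N$ survives the walk's excursions into $\Tr\setminus\Tr_N$ with uniformly positive probability. We have not been able to carry this out, which is why the statement is posed as a conjecture.
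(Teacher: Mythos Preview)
This statement is a \emph{conjecture} in the paper, not a theorem; the paper offers no proof, only a short discussion of partial results and possible routes. Your write-up is appropriately framed as ``towards a proof'' and explicitly concedes the gap, so there is nothing to fault on that score.

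It is still worth comparing the heuristics. The paper's suggested route is different from yours: it observes that for unimodular measures, non-amenability of the \emph{measure} (in the sense of \cite{AL:07}) forces positive speed (Theorem~8.15 there), and so reduces Conjecture~\ref{conj:speed} to the companion Conjecture~\ref{conj:non-amen}. It also records a genuine partial result you do not mention: when the base graph is a unimodular random \emph{tree} with infinitely many thin ends, Lemma~\ref{lem:ends} combined with Proposition~4.9, Theorem~6.2 and Corollary~8.10 of \cite{AL:07} already gives positive speed. Your route --- prove $h>0$ and feed it into $h\le \ell v$ --- is essentially the contrapositive of the paper's own remark that, by \cite{BC}, positive speed on URGs is equivalent to non-Liouville; so the two strategies are close cousins rather than independent. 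Your anchored-expansion suggestion is a reasonable third alternative the paper does not raise.

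One small correction: your lamplighter example is fine as a caution that exponential growth plus transience does not force $\ell>0$, but note that $\Z_2\wr\Z$ is a \emph{Cayley} graph, hence unimodular with a non-random root, whereas the trace is a genuinely random unimodular graph; the relevant obstruction is thus at the level of \emph{measures}, which is exactly why the paper phrases the companion problem as non-amenability of the trace measure rather than of a fixed graph.
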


Note that the speed of the SRWs on traces of transient BRWs on
URG  exists, and is deterministic if the unimodular measure is extremal. This follows from the fact that the environment seen from the point of view of the particle is stationary, and ergodic if the unimodular measure is extremal, e.g. see \cite{AL:07}. In fact, Lemma \ref{lem:ends} together with Proposition 4.9 and Theorem 6.2 in \cite{AL:07} implies that the SRW has positive speed on the trace of a transient BRW on a unimodular random tree with infinitely many ends. 
The question of positive speed is connected to non-amenability of the unimodular measure: Theorem 8.15 in \cite{AL:07} states that for unimodular and non-amenable measures $\mu$ concentrated on graphs with bounded degrees the speed of SRW is $\mu$-a.s. positive. The measure of the trace of a transient BRW on a unimodular random tree with infinitely many ends is non-amenable, see Lemma  \ref{lem:ends} and Corollary 8.10 in \cite{AL:07}. We conjecture this to hold more generally:

\begin{conj}\label{conj:non-amen}
Let $\mu$ be the measure of the trace of a transient BRW on a URG. Then, $\mu$ is non-amenable.
\end{conj}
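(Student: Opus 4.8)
The plan is to transfer invariant non-amenability from the family tree down to the trace, staying inside the unimodular framework of \thmref{thm:tr=unimod}. Here ``amenable'' always means \emph{invariantly} amenable in the sense of \S8 of \cite{AL:07}, i.e.\ non-hyperfinite; note that by Corollary~\ref{cor:tr_rec} the trace has spectral radius~$1$, so it is an amenable \emph{graph} and no spectral argument can work --- one genuinely has to use the invariant notion. Two elementary stability facts about hyperfiniteness of a unimodular random graph will be used: it is inherited by invariant subgraphs; and if $\pi\colon H\to H'$ is an equivariant edge-to-edge homomorphism between unimodular random graphs whose fibres over vertices and over edges are uniformly bounded, then hyperfiniteness of $H'$ implies hyperfiniteness of $H$ (pull back a finite invariant exhaustion of $H'$ along $\pi$ and intersect with the component relation of $H$). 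Equivalently, invariant non-amenability passes from $H$ to its bounded-to-one equivariant image.

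First I would produce the relevant objects, exactly as in the proof of \thmref{thm:SRWonBRWtrace}: after a Bernoulli perturbation and Dekking's embedding \cite{Dekking:91}, work with an invariant stretched full binary subtree $\widehat\T$ of the family tree and with the percolation $\widehat\T_N$ of its edges traversed at most $N$ times; by \cite{Hag:97}, for $N$ large $\widehat\T_N$ has an infinite cluster $\widehat\T_N^\infty$ with branching number $>1$. Its image under $\langle v^-,v\rangle\mapsto\langle S_{v^-},S_v\rangle$ is an invariant subgraph $\widehat\Tr_N^\infty$ of $\Tr$ (a component of the set of edges visited at most $N$ times), and the induced map $\pi\colon\widehat\T_N^\infty\to\widehat\Tr_N^\infty$ is an equivariant edge-to-edge homomorphism that is uniformly bounded-to-one (a base-graph edge has at most $N$ preimages, a vertex at most $N$ times the degree bound of $G$). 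After biasing the root as in the \UGW{}/\thmref{thm:tr=unimod} construction, all of these are unimodular random graphs and $\pi$ is a morphism between them.

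Now $\widehat\T_N^\infty$ has infinitely many ends, since its branching number exceeds~$1$, hence is invariantly non-amenable by Corollary~8.10 in~\cite{AL:07} --- the same mechanism used for BRWs on trees in the discussion above, via \lemref{lem:ends}; alternatively this can be re-derived from the cutset estimate in the proof of \lemref{lem:cutpoints} together with a F\o lner-type description of invariant amenability. Assume, for contradiction, that the measure of $\Tr$ is amenable. Then its invariant subgraph $\widehat\Tr_N^\infty$ is hyperfinite, and pulling this back through the uniformly bounded-to-one equivariant map $\pi$ makes $\widehat\T_N^\infty$ hyperfinite, contradicting $br(\widehat\T_N^\infty)>1$. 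Hence the measure of the trace is non-amenable, which is the assertion.

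The step I expect to be the main obstacle is turning $\pi$ into a bona fide morphism of unimodular random graphs and checking the two stability facts in this enriched setting. One has to choose the right joint enrichment --- the base graph, the \UGW{} family tree, the tree-indexed walk, the partition of tree-edges into fibres over base-graph edges, and the marked infinite cluster --- establish its unimodularity along the lines of \thmref{thm:tr=unimod}, and verify that conditioning on the root of $\T$ lying in $\widehat\T_N^\infty$ (an event of positive probability, by the argument of \cite{Hag:97} used in the proof of \thmref{thm:SRWonBRWtrace}) preserves unimodularity, so that $\widehat\T_N^\infty$ and $\widehat\Tr_N^\infty$ are legitimate unimodular random graphs. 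A secondary point, which is however essentially settled by the citation to \cite{AL:07}, is the exact form of ``branching number $>1$ implies invariantly non-amenable'' for a unimodular random tree arising as an invariant percolation of a binary tree.
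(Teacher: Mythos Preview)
This statement is a \emph{conjecture} in the paper, not a theorem: it appears in the discussion section with no proof, preceded only by the remark that the special case of transient BRW on a unimodular random tree with infinitely many ends follows from Lemma~\ref{lem:ends} together with Corollary~8.10 of~\cite{AL:07}. There is therefore no proof in the paper to compare your attempt against.

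Your outline is a natural strategy and you have correctly isolated its crux. The step you flag --- realising $\pi\colon\widehat\T_N^\infty\to\widehat\Tr_N^\infty$ as a morphism between unimodular random graphs and checking that a hyperfinite exhaustion $(\omega_n)$ of $\Tr$ pulls back to one of $\widehat\T_N^\infty$ --- is indeed where all the content lies, and you have not carried it out. The delicate point is that the $\omega_n$ witnessing hyperfiniteness are a priori only unimodular \emph{couplings} with $(\Tr,\orig)$, not deterministic functions of the unrooted trace; one must argue that their conditional law given $\Tr$ is root-invariant in the appropriate sense and hence lifts to an invariant marking of the labelled \UGW{} tree, so that the pulled-back percolation of $\widehat\T_N^\infty$ is itself invariant. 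In the measured-equivalence-relation language the bounded-to-one transfer of hyperfiniteness you invoke is standard, but translating it into the rooted-graph framework of \cite{AL:07} with the correct root biasing (and through the further passage to $\T_N$, the conditioning on the infinite cluster, and the Dekking embedding) is precisely the work that remains. What you have written is a credible plan of attack rather than a proof, which is consistent with the authors' decision to leave the statement open.
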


On Cayley graphs positive speed is equivalent to admitting non-constant bounded harmonic functions, see~\cite{KV}. In~\cite{BC} this equivalence was extended to URG. Thus if Conjecture~\ref{conj:speed} is true, we expect non-constant bounded harmonic functions on the trace. It is of interest to study the Poisson and Martin boundary. In
particular, one might ask if the result of Lemma \ref{lem:ends} holds in general:

\begin{que}\label{que:ends}
Does the trace of a transient BRW on a Cayley graph have a.s.~infinitely many topological ends?
\end{que}

Theorem \ref{thm:critperc} states that the critical percolation probability is strictly less than $1$.  Observe that Conjecture \ref{conj:non-amen} would imply (under a first moment condition) that a.s.~there is no infinite cluster in Bernoulli($p_{c}$) percolation of the trace, see Theorem 8.11 in \cite{AL:07}.  Due to exponential growth and unimodularity of the trace one might expect mean-field criticality for percolation on the trace.

\begin{que}  Does the triangle condition hold for percolation on the trace of a transient BRW on URG? Is there mean-field criticality for percolation on the trace? We refer to and \cite{Kozma} and \cite{Schon:02}  for further details on these questions.
\end{que}

Hueter and Lalley \cite{hueter:00} studied BRWs on homogeneous trees. Observe that in their setting and notation weak survival is equivalent to transience in our language. In the transient regime the BRW eventually vacates every finite subset and the particle trails converge to the geometric boundary $\Omega$ of the tree. Let $\Lambda$, called \emph{limit set} of the BRW, be the random subset of the boundary that consists of all ends that are visited infinitely often by the process. In \cite{hueter:00} it is shown that the limit set has Hausdorff dimension no larger than one half the Hausdorff dimension of the entire boundary $\Omega$. 

Recall that a vertex $x$ is a furcation point, if removing $x$ would split the trace into at least $3$ infinite clusters. An application of the MTP shows that the number of furcation points is a.s. $0$ or $\infty$. Furthermore, one might conjecture that the trace of a transient BRW  has infinitely many ends, compare with Question \ref{que:ends}. Hence, it would be interesting to know how the Hausdorff dimensions of the limit sets compare in general to the one of the full boundary.

We suspect that the Hausdorff dimension of the limit set (observe that for $m>1/\rho (P)$ the BRW is recurrent and the limit set equals the full boundary) depends on the decay of the return probabilities. We make the following conjecture in believing that there is a more explicit connection between the Hausdorff dimension and the decay of the return probabilities, compare with \cite{hueter:00} and \cite{La:06}.

\begin{conj}
Consider BRW on a non-amenable Cayley graph. Then, the Hausdorff dimension of the limit set is continuous for $m\neq 1/\rho(P)$ and discontinuous at $m=1/\rho(P)$.
\end{conj}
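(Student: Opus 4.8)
A natural route to this conjecture passes through an explicit formula relating $\dim_H\Lambda$ to the Green's function $G(x,y|z)=\sum_{n\ge0}z^{n}p^{(n)}(x,y)$ of the underlying SRW. The starting observation is elementary: the expected number of BRW particles that ever visit $y$ equals $G(\orig,y|m)$, which is finite precisely in the transient regime $m<1/\rho(P)$; equivalently, $z=m$ then lies strictly inside the open disk of radius $1/\rho(P)$ on which $z\mapsto G(x,y|z)$ is analytic. I would carry out the argument on a \emph{hyperbolic} Cayley graph $G$, with Gromov boundary $\partial G$ carrying a visual metric so that $\dim_H\partial G$ (call it $v_{0}$, the exponential growth rate) and the limit set are unambiguous; in the non-hyperbolic non-amenable case one must first choose which boundary $\Lambda$ lives in --- presumably the Martin boundary --- and the scheme below should transfer once the corresponding Green-function asymptotics are available.

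\textbf{Step 1: a variational formula.} Following the first- and second-moment scheme of Hueter and Lalley \cite{hueter:00}, a boundary point $\xi$ lies in $\Lambda$ iff the subcloud of particles that stays within bounded distance of the geodesic ray $[\orig,\xi)$ is infinite. Covering $\partial G$ by shadows of vertices at distance $n$ and estimating, for each such vertex $y$, the probability that its shadow meets $\Lambda$ in terms of $\min\{1,G(\orig,y|m)\}$, a Frostman-type energy argument should identify $\dim_H\Lambda$ with the largest zero $s^{*}(m)$ of an explicit convex function $F_{m}$ assembled from $m$ and the geodesic decay of the Green's function --- equal to $v_{0}-\ell(m)$ when $G(\orig,\cdot|m)$ decays at a direction-independent rate $\ell(m)=-\lim_{d(\orig,y)\to\infty}\frac{1}{d(\orig,y)}\log G(\orig,y|m)$. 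I expect this step to be the main obstacle: on the homogeneous tree the relevant generating functions are explicit --- which is exactly what makes \cite{hueter:00} work --- whereas on a general hyperbolic group one needs Ancona's inequalities and the hyperbolic local limit theorems of Gou\"ezel and Lalley, both to control the overlaps between the shadows of distinct rays and to make sense of, and identify, the decay rate governing $F_{m}$.

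\textbf{Step 2: continuity away from $1/\rho(P)$.} For $m>1/\rho(P)$ the BRW is recurrent by Theorem~\ref{thm:rec_trans}, so $\Lambda=\partial G$ and $\dim_H\Lambda\equiv v_{0}$ is constant, hence continuous. For $1<m<1/\rho(P)$ the coefficients of $F_{m}$ depend real-analytically on $m$, since $z\mapsto G(\orig,y|z)$ is analytic on $\{|z|<1/\rho(P)\}$ and, by Ancona's inequalities, the geodesic decay rate is the value of an analytic family of Martin-kernel functionals. As $F_{m}$ is strictly monotone at $s^{*}(m)$ --- strict convexity, together with the fact that the competing growth is genuinely exponential --- the implicit function theorem makes $s^{*}(m)$, and hence $\dim_H\Lambda$, real-analytic, in particular continuous, on $(1,1/\rho(P))$.

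\textbf{Step 3: the jump at $1/\rho(P)$.} It remains to show $\lim_{m\uparrow 1/\rho(P)}s^{*}(m)=\tfrac12 v_{0}$, which is strictly below $v_{0}$, so that $\dim_H\Lambda$ is discontinuous there. The bound $s^{*}(m)\le\tfrac12 v_{0}$ for every $m<1/\rho(P)$ is precisely the Cauchy--Schwarz estimate underlying Hueter and Lalley's inequality: the second moment of the shadow-count is controlled by $\sum_{y}G(\orig,y|m)^{2}$ against the squared first moment $\bigl(\sum_{y}G(\orig,y|m)\bigr)^{2}$, and since $G(\orig,y|m)$ decays purely geometrically in $d(\orig,y)$ (Ancona), the two exponential rates differ by exactly a factor $2$. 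The matching bound $s^{*}(m)\to\tfrac12 v_{0}$ is the statement that the critical walk decays at ``square-root'' speed: the first-passage generating function satisfies $F(1/\rho)=q^{-1/2}$ on the $(q+1)$-regular tree, and in general this reflects $\rho(P)$ being the $\ell^{2}$-spectral radius and is accessible for hyperbolic groups through the critical local limit theorem $G(\orig,y|1/\rho(P))\asymp d(\orig,y)^{-3/2}\rho(P)^{d(\orig,y)}$. Putting the three steps together, $\dim_H\Lambda$ is continuous on $(1,1/\rho(P))\cup(1/\rho(P),\infty)$ and jumps from $\tfrac12 v_{0}$ to $v_{0}$ at $m=1/\rho(P)$.
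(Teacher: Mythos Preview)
The paper does not prove this statement: it is stated as an open \emph{conjecture} in the discussion section, with no proof or proof sketch offered. The authors merely motivate it by the Hueter--Lalley result on homogeneous trees and by the expected link between the Hausdorff dimension and the decay of return probabilities. So there is nothing in the paper to compare your proposal against.

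As a strategy, your outline follows the natural template suggested by \cite{hueter:00}, and you are candid that Step~1 is the real obstacle. A few concrete gaps are worth flagging. First, the conjecture is stated for arbitrary non-amenable Cayley graphs, but your argument is confined to hyperbolic groups; you note this, but the passage ``should transfer once the corresponding Green-function asymptotics are available'' hides essentially the entire difficulty --- for non-hyperbolic non-amenable groups there is at present no analogue of Ancona's inequalities or of the Gou\"ezel--Lalley machinery, and even the choice of boundary is unclear. Second, even in the hyperbolic case, the existence of a single direction-independent decay rate $\ell(m)$ for $G(\orig,y\mid m)$ is not automatic; Ancona's inequalities give multiplicativity along geodesics but not isotropy, so the variational formula in Step~1 may be genuinely direction-dependent and the reduction to a scalar $s^{*}(m)$ needs justification. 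Third, the critical asymptotic you quote in Step~3, $G(\orig,y\mid 1/\rho(P))\asymp d(\orig,y)^{-3/2}\rho(P)^{d(\orig,y)}$, conflates the $n^{-3/2}$ local limit correction in the time variable with decay in the spatial variable; the actual spatial decay of the Green function at the spectral radius on a hyperbolic group is a separate (and delicate) question, and the ``exactly one half'' phenomenon does not follow from that formula as written.

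In short: your proposal is a plausible programme, consistent with the heuristics the paper alludes to, but it is not a proof, and the paper does not claim to have one either.
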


\subsection{General graphs}
One natural direction to generalize our results is to consider general graphs (with bounded degrees). Since our results  depend on the homogeneity of the graph, there are basic questions that were not yet treated or are still unsolved.

One first question to ask about the trace of a BRW is whether the process eventually visits the whole graph almost surely. This is equivalent to the question of strong recurrence of the process and until now no \emph{nice} criterion for strong recurrence in general is known. In \cite{menshikov:97} they give a rather implicit criterion in terms of  Lyapunov functions. Another attempt in order to understand strong recurrence of BRWs is made in \cite{mueller08} where more references and details about this problem can be found. We state the conjecture made in \cite{mueller08}: let $\tilde\rho(P)=\inf \rho(P_F)$, where  the $\inf$ is over all induced connected subgraphs $F\subset G$ with finite boundaries.

\begin{conj}\label{conj:strrec}
Let $G$ be a graph with bounded degrees. Then, the BRW is strongly recurrent iff the mean number of offspring $m$ is larger than $1/\tilde\rho(P)$.
\end{conj}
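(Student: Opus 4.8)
The plan is to attack Conjecture~\ref{conj:strrec} by separating the two implications, of which one is essentially free and the other is the genuine obstacle. First I would record the easy direction: if $m \le 1/\tilde\rho(P)$, then there is an induced connected subgraph $F \subset G$ with finite boundary and $\rho(P_F) \le 1/m$, i.e.\ $m \le 1/\rho(P_F)$. One then wants to conclude that the BRW cannot be strongly recurrent. The idea is that a BRW that fills up $G$ must in particular return to a fixed vertex $x_0 \in F$ infinitely often, and each such return is made by a particle whose ancestral trajectory, between consecutive visits to the finite boundary $\partial F$, either stays inside $F$ or exits through $\partial F$. Exits through $\partial F$ are ``costly'' in a way controlled by the finiteness of $\partial F$; the excursions inside $F$ are governed by a BRW with transition kernel $P_F$, which is transient by Theorem~\ref{thm:rec_trans} since $m \le 1/\rho(P_F)$. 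A careful accounting (in the spirit of the seed and Mass-Transport arguments mentioned after Theorem~\ref{thm:rec_unimod}) should show that only finitely many particles ever reach $x_0$ via excursions staying in $F$, while the boundary $\partial F$ can be crossed only finitely often along any single genealogical line without the population dying inside $F$; summing up, $x_0$ is visited finitely often a.s., contradicting strong recurrence. The technical point to get right here is that $\rho(P_F)$ is the spectral radius of a \emph{substochastic} operator, so one must use the finite-approximation / exhaustion structure to make the excursion decomposition rigorous.

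The hard direction is: $m > 1/\tilde\rho(P)$ implies strong recurrence. Here I would try to mimic the strategy of Theorem~\ref{thm:rec_unimod}, but without unimodularity one cannot invoke stationarity of the environment seen from the particle. Instead the natural substitute is the finite-approximation property of $\tilde\rho$: for every $\eps > 0$ there is a finite connected $F$ with $\rho(P_F) > \tilde\rho(P) - \eps$, chosen so that still $m > 1/\rho(P_F)$. By Theorem~\ref{thm:rec_trans} applied to the (honest) finite graph $F$ with a reflecting or absorbing boundary convention, a BRW confined to $F$ with branching rate $m$ is ``recurrent'' in the strong sense that the population inside $F$, once it is large enough, survives forever inside $F$ with positive probability and then, by the supercriticality $m\rho(P_F) > 1$, grows; this is exactly the \emph{seed} mechanism of \cite{comets:05}. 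So the plan would be: (i) show that from any vertex, with positive probability the BRW plants a seed, i.e.\ drives enough particles into a translate of $F$ so that the sub-BRW restricted to that copy of $F$ is supercritical and survives in $F$ forever; (ii) show that such a surviving sub-BRW visits a prescribed vertex of $F$ infinitely often (again by supercriticality of $P_F$, $m$); (iii) use a Borel--Cantelli / branching argument over the infinitely many independent attempts to produce a seed along the tree to upgrade ``positive probability'' to ``almost surely''; (iv) invoke irreducibility of $P$ to spread infinitely-often visits from one vertex to every vertex, giving $\alpha(x) = 1$ for all $x$.

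The main obstacle is step~(iii) together with the fact that, on a non-homogeneous graph, ``planting a seed'' need not have probability bounded away from zero uniformly over the vertices of $G$: the local geometry around $x_n$, the position of the $n$-th attempt, can degenerate. In the homogeneous (unimodular, quasi-transitive) setting this is handled by stationarity or by transitivity; here one has no such uniformity, and indeed this is precisely why the statement is only a conjecture. I would therefore expect that a full proof either needs an additional hypothesis (e.g.\ that the family of finite subgraphs witnessing $\tilde\rho$ can be chosen ``uniformly embeddable'', or some bounded-geometry condition stronger than bounded degree), or a genuinely new idea to control the return structure of the BRW without homogeneity. A reasonable intermediate goal, which I would state and prove rather than the full conjecture, is: $m > 1/\tilde\rho(P)$ implies the BRW is recurrent (i.e.\ $\alpha(x) > 0$), and under any one of several natural regularity conditions on $G$ it is strongly recurrent; the reverse implication $m \le 1/\tilde\rho(P) \Rightarrow$ not strongly recurrent I would claim unconditionally via the excursion argument above. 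The honest assessment is that the converse half is the one I can see how to finish, and the forward half is the real content of the conjecture.
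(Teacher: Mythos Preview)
The statement you chose is Conjecture~\ref{conj:strrec}, which the paper presents in its discussion section as an \emph{open problem} taken from \cite{mueller08}; the paper offers no proof, so there is nothing to compare your attempt against.

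As for the sketch itself: you already concede that the direction $m>1/\tilde\rho(P)\Rightarrow$ strong recurrence cannot be closed without new ideas or extra hypotheses, which is the honest status of the problem. But your labelling of the other direction as ``essentially free'' is too optimistic, and the argument you give has a genuine gap. First, from $m\le 1/\tilde\rho(P)$ you claim the existence of an $F$ with $\rho(P_F)\le 1/m$; since $\tilde\rho(P)=\inf_F\rho(P_F)$, this requires either the infimum to be attained or the strict inequality $m<1/\tilde\rho(P)$, so the boundary case is already not covered. Second, and more seriously, even granting such an $F$, transience of the BRW with substochastic kernel $P_F$ only says that a \emph{single} family line started inside $F$ contributes finitely many visits to $x_0$ before exiting through $\partial F$; it says nothing about the aggregate population, because the full BRW on $G$ may push an exponentially growing number of particles back into $F$ through the finite set $\partial F$. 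Your phrase ``crossed only finitely often along any single genealogical line'' controls the wrong quantity: what you need to bound is the total flux of particles across $\partial F$, summed over all genealogical lines, and finiteness of $\partial F$ alone does not do this. The ``careful accounting'' you defer is therefore not bookkeeping but the actual missing content, and this direction is no less open than the one you call hard.
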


Connected to the question of recurrence is the question if the trace is always a proper subgraph of the base graph. If the BRW is strongly recurrent, then  $\P(\Tr=G)=1$, and if it is weakly recurrent, then $0<\P(\Tr=G)<1$.
\begin{que}
Does the event $\{\Tr=G\}$ coincide with the event that the BRW returns infinitely many times to the origin?
\end{que}

In view of Lemma \ref{lem:weakrec} and the fact that the SRW on the trace of a transient BRW on a unimodular random graph has spectral radius $1$, one might ask the following:

\begin{que}
Is the spectral radius of the SRW on the trace of a BRW equal to $1$ a.s.? Is the trace a.s. an amenable graph?
\end{que}

Furthermore, we believe Theorem \ref{thm:SRWonBRWtrace} to hold in general:

\begin{conj}\label{conj:SRWonBRWtrace}
The trace of a transient BRW is a.s.~transient for SRW.
\end{conj}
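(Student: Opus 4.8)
The plan is to adapt the argument of Theorem~\ref{thm:SRWonBRWtrace} but work directly in the base graph rather than invoking unimodularity of the environment process, since on a general bounded-degree graph the environment seen from a tree-indexed geodesic need not be stationary. The key structural fact that does survive is that the family tree $\T$ of a supercritical BRW contains, after a fixed edge-stretching by some constant $K$, a copy of the full binary tree (Theorem~2 in~\cite{Dekking:91}), and that the tree-indexed random walk maps this stretched binary tree into the trace $\Tr$. So the first step is to restrict attention to the stretched binary subtree $\T_{\mathrm{bin}}\subseteq\T$ and its image in $G$, and to reduce the claim to producing a unit flow of finite energy from $\orig$ to infinity inside this image.

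The second, and genuinely difficult, step is to control the number of tree-vertices mapping to a fixed edge of $G$ \emph{without} the homogeneity that in the unimodular case made $\T_N$ an invariant percolation with a usable Häggström-type 0-1 law. On a general graph one cannot simply say ``choose $N$ large so that $\T_N$ has branching number $>1$ with probability one''; the multiplicities $|T_{\langle x,y\rangle}|$ can have heavy, position-dependent tails. The natural route is to use transience of the BRW quantitatively: since $m>1/\rho(P)$ fails (the BRW is transient), the expected number of visits to any edge is finite and in fact the Green-function-type sums $\sum_v \P(S_{v^-}=x, S_v=y)$ decay. One would then try to show that with positive probability the stretched binary subtree can be further thinned (by a Bernoulli percolation on $\T_{\mathrm{bin}}$, as in the proof of Theorem~\ref{thm:SRWonBRWtrace}, plus discarding tree-edges whose image edge is overloaded) so that the surviving subtree $\T'$ still has branching number strictly above $1$ and every edge of $G$ receives at most $N$ edges of $\T'$. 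Given such a $\T'$, a finite-energy unit flow on $\T'$ pushes forward to a finite-energy unit flow on its image in $G$ exactly as before, and Corollary~2.15 in~\cite{woess} upgrades transience of a subgraph to transience of $\Tr$.

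The main obstacle is precisely the step of thinning $\T_{\mathrm{bin}}$ to bounded-multiplicity while preserving branching number $>1$: on a non-homogeneous graph there is no a priori uniform bound on $\E|T_{\langle x,y\rangle}|$ over edges, and the bad edges need not be rare along every ray of the tree. One possible fix is to prove the statement only on positive-probability events and then amplify: Lemma~\ref{lem:weakrec} already shows the trace contains long line segments with positive probability, and the tree-indexed structure means that, conditioned on a fixed geodesic in $\T$ behaving transiently, the independently-launched subtrees give many ``fresh'' copies of the base-graph picture; a percolation/second-moment argument along the lines of~\cite{Hag:97} applied branch-by-branch, rather than globally via invariance, should still force an infinite bounded-multiplicity subtree. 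Making this localization rigorous, and in particular replacing the invariant-percolation input of~\cite{Hag:97} by a direct branching-process estimate valid without transitivity, is where the real work lies; everything downstream (flow push-forward, energy bound, conclusion via~\cite{TL:83} and~\cite{woess}) is routine and identical to the unimodular case.
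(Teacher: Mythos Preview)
The statement you are attempting to prove is \emph{Conjecture}~\ref{conj:SRWonBRWtrace} in the paper, not a theorem: the paper does not prove it and offers no argument beyond stating the belief that Theorem~\ref{thm:SRWonBRWtrace} should extend to general bounded-degree graphs. So there is no paper proof to compare your proposal against.

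Your proposal is accordingly not a proof either, and to your credit you say so explicitly. You correctly isolate the single point where the unimodular argument breaks: the percolation $\T_N$ on the family tree is no longer automorphism-invariant once the base graph is arbitrary, so the H\"aggstr\"om results that guarantee infinite clusters with branching number $>1$ at high marginal are unavailable. Everything else in your outline (the Dekking stretched binary subtree, pushing a finite-energy flow from the tree to the base graph, the monotonicity step via Corollary~2.15 of \cite{woess}) is sound and is exactly what the paper does in the unimodular case. But the step you label ``where the real work lies'' --- thinning $\T_{\mathrm{bin}}$ to bounded multiplicity while retaining branching number $>1$, with no invariance to appeal to --- is not carried out, and the heuristics you offer (branch-by-branch second-moment estimates, amplifying positive-probability events along independent subtrees) are suggestive but far from a proof. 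In particular, the remark that ``the expected number of visits to any edge is finite'' is already delicate at the critical value $m=1/\rho(P)$, and even in the subcritical regime finiteness of $\E|T_{\langle x,y\rangle}|$ edge-by-edge gives no uniform control, which is precisely the obstruction you then acknowledge.

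In short: your diagnosis of the difficulty matches the reason the paper leaves this as a conjecture, and your downstream steps are correct, but the proposal does not close the gap and should not be presented as a proof.
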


Eventually, we state the conjecture made in \cite{BG-GL:07} in the following stronger form.

\begin{conj}\label{conj:tr_strrec}
Every BRW on a.e.~trace of a transient BRW is  strongly recurrent.
\end{conj}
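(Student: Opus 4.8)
The statement is the general, non-homogeneous version of Corollary~\ref{cor:tr_rec}, whose proof rested on three ingredients: Lemma~\ref{lem:weakrec} (the trace contains long line segments with positive probability), Theorem~\ref{thm:tr=unimod} (after biasing the root the trace is unimodular, which turns that positive-probability statement into an almost-sure one), and Theorem~\ref{thm:rec_unimod} (the $0$--$1$ law $\alpha\in\{0,1\}$ on unimodular random graphs). For a BRW on an arbitrary bounded-degree graph $G$ the last two ingredients are unavailable, since the trace carries no natural unimodular structure when $G$ itself is inhomogeneous. The plan is therefore to replace Theorem~\ref{thm:tr=unimod} by a direct almost-sure statement about line segments, and Theorem~\ref{thm:rec_unimod} by a $0$--$1$ law obtained from a seed argument.

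The first step is to upgrade Lemma~\ref{lem:weakrec}: I would show that for every $k$ the trace $\Tr$ contains, almost surely, infinitely many pairwise vertex-disjoint induced copies of $L_k$ whose interior vertices have degree $2$ in $\Tr$. The mechanism is the one from the proof of Lemma~\ref{lem:weakrec}, applied repeatedly at the advancing frontier. Since $p_0=0$ and $p_1<1$ the population tends to $\infty$, and since the BRW is transient every finite set is eventually vacated; so at a suitable increasing sequence of times $n_j$ there is a frontier particle at graph distance far beyond the (finite) trace produced so far, from which an unvisited path of length $k$ emanates, and — provided that particle is sufficiently far ahead of the rest of the (finite) cloud, which one can arrange along a subsequence using transience — the event that its descendants perform the ``minimal offspring, all move forward'' manoeuvre along that path while no other particle ever enters a unit neighbourhood of it has conditional probability $\ge\delta_k>0$ given $\mathcal F_{n_j}$. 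Choosing $n_{j+1}$ large enough that the $j$-th attempt has settled (again by transience), the attempts at different scales are asymptotically conditionally independent, and a conditional form of the second Borel--Cantelli lemma yields infinitely many successes almost surely. Consequently $\rho(\Tr)\ge\rho(P_{L_k})$ for every $k$, so $\rho(\Tr)=1$ almost surely (this also settles the Question following Lemma~\ref{lem:weakrec}); by Theorem~\ref{thm:rec_trans} every BRW with mean offspring $m'>1$ on almost every trace is then recurrent.

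Promoting recurrence to strong recurrence is the hard step. Fix $m'>1$ and choose $k$ with $m'\,\rho(P_{L_k})>1$ uniformly over copies (possible since degrees are bounded by $D$), so a BRW with mean $m'$ confined to one of the disjoint copies $\Sigma_1,\Sigma_2,\dots$ of $L_k$ is a supercritical Galton--Watson process and survives there with probability $q=q(k,m')>0$. If one can show that the BRW on $\Tr$ a.s.\ visits infinitely many distinct $\Sigma_i$, then at each first visit to a fresh seed the confined subprocess survives independently with probability $\ge q$, so a.s.\ some seed is visited infinitely often, and irreducibility of the SRW on the connected graph $\Tr$ then forces every vertex to be visited infinitely often, i.e.\ $\alpha(x)=1$ for all $x$. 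The obstacle is precisely this input: a recurrent but not strongly recurrent BRW on $\Tr$ could, on an event of probability $1-\alpha(x)$, escape along a part of $\Tr$ meeting only finitely many seeds, and without any homogeneity of $\Tr$ there is no environment-seen-from-the-particle stationarity with which to run the argument of Theorem~\ref{thm:rec_unimod} (there the images of a geodesic ray of the family tree formed a stationary random walk precisely because the base graph was unimodular). To close the gap one would want to arrange the seeds of Step~1 to be unavoidable — every infinite ray of $\Tr$ coming within bounded distance of infinitely many of them — e.g.\ by placing them along images in $\Tr$ of geodesics of the family tree and invoking a branching-number/cutset estimate as in Lemmata~\ref{lem:cutpoints} and~\ref{lem:ends}; but those estimates used unimodularity of the base graph. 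Alternatively one could try to prove $\tilde\rho(\Tr)=1$ almost surely and invoke Conjecture~\ref{conj:strrec}; note, however, that an amenable graph can contain a non-amenable subgraph joined by a single edge, so $\rho(\Tr)=1$ does not give $\tilde\rho(\Tr)=1$ for free, and this route rests on two further open problems. I expect this second step to be the genuine crux of any proof of Conjecture~\ref{conj:tr_strrec}.
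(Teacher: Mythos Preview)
The statement you are addressing is listed in the paper as an \emph{open conjecture}; the authors give no proof, only stating it as a strengthening of a conjecture from \cite{BG-GL:07}. So there is no argument in the paper to compare your proposal against, and the appropriate evaluation is simply whether your outline constitutes progress.

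Your identification of the structure is accurate: Corollary~\ref{cor:tr_rec} rests on unimodularity through Theorems~\ref{thm:tr=unimod} and~\ref{thm:rec_unimod}, and both are unavailable for general base graphs. Your honest assessment of Step~2 --- that promoting recurrence to strong recurrence without a stationary environment process is the genuine crux --- is correct, and the seed/Borel--Cantelli strategy you sketch is exactly the mechanism behind Theorem~\ref{thm:rec_unimod} and the remark following it; the missing ingredient you isolate (that the BRW on $\Tr$ must a.s.\ visit infinitely many seeds) is the right one.

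However, Step~1 is also not settled by your argument. That $\rho(\Tr)=1$ almost surely on general graphs is precisely the open Question the paper poses after Lemma~\ref{lem:weakrec}, and your upgrade of that lemma has a gap. In Lemma~\ref{lem:weakrec} the line segment is built at the origin: one forces the \emph{entire} population along a path for $k$ steps and then invokes transience once. In your version you pick one frontier particle at time $n_j$ and ask that (i) its descendants perform the manoeuvre and never return, and (ii) \emph{every other particle and all its descendants} forever avoid a fixed neighbourhood of the segment. Condition~(ii) involves on the order of $m^{n_j}$ independent BRWs, each of which must miss a fixed finite set; on a general bounded-degree graph there is no reason the product of these avoidance probabilities stays bounded below by some $\delta_k>0$ as $n_j\to\infty$, so the asserted uniform lower bound on the conditional probability is unjustified. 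Without it the conditional Borel--Cantelli step fails. Thus both steps of your programme remain open, consistently with the paper's treatment of the statement as a conjecture.
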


\subsection{General family trees}
Another way of generalization is to consider more general family trees. Theorem \ref{thm:tr=unimod} naturally generalizes to traces of tree-indexed random walks on Cayley graphs where the family tree is a unimodular random  tree. For example we could use the trace of a BRW on a homogeneous tree as the family tree for another BRW or even iterate this procedure. In consideration of the rich behaviour of tree-indexed random walks in general, see \cite{benjamini:94}, it is interesting to study to which extent the results presented here hold in a more general setting of family trees.

\subsection{Random unimodular graphs}
It was recently proven in \cite{Elek:08} that any unimodular measure on the space of rooted trees can be obtained as an appropriate weak limit. While this question is open for unimodular measures on rooted graphs, one might be able to construct, e.g. using the mapping of the family tree to the base graph, a sequence of rooted finite graphs that converge to the trace of BRW.

\begin{que}
Can the trace of a transient BRW on a URG be obtained as the weak limit of finite rooted graphs?
\end{que}

Until now, no \emph{nice} examples for unimodular measures on rooted trees except for \UGW~ have been known, see \cite{KS:09}. Theorem \ref{thm:tr=unimod} applied to BRWs on  homogeneous trees deliver other examples for unimodular random trees. Another candidate is given by the construction in the proof of Theorem \ref{thm:SRWonBRWtrace}, as well as the trace of a bi-infinite SRW on a unimodular random tree. However, the latter has no \emph{interesting} boundary properties.

\begin{rem} In the way trace of BRW  was studied here one can consider the same questions for other related processes that exhibit phase transitions on non-amenable graphs, see \cite{La:06},~\cite{Lyons:00}.
\end{rem}

\subsubsection*{Acknowledgment}
The authors wish to thank Nicolas Curien and Nina Gantert for helpful comments on a first version of this note. We are also grateful to the referees for their careful reading and useful comments.

\begin{small}
\addcontentsline{toc}{chapter}{Bibliography}
\bibliography{bib}
\end{small}

\bigskip
 \noindent
\begin{tabular}{l}
Itai Benjamini\\
Faculty of Mathematics\\
Weizmann Institute of Science\\
Rehovot 76100\\ Israel\\
{\tt itai.benjamini@weizmann.ac.il}\\
\\ \\
Sebastian M\"uller \\
Laboratoire d'Analyse, Topologie, Probabilit\'es\\
Universit\'e de Provence\\
 39, rue F. Joliot Curie\\
13453 Marseille Cedex 13\\
France\\
{\tt mueller@cmi.univ-mrs.fr}\\
\end{tabular}

\end{document}